\newtheorem{Proposition}{Proposition}[section]
\newtheorem{Definition}[Proposition]{Definition}
\newtheorem{Lemma}[Proposition]{Lemma}
\newtheorem{Theorem}[Proposition]{Theorem}
\newtheorem{MainTheorem}{Theorem}
\newtheorem{Corollary}[Proposition]{Corollary}
\def\R{\mathbb{R}}
\def\Z{\mathbb{Z}}
\def\C{\mathbb{C}}
\def\O{\mathbb{O}}
\def\S{\mathbb{S}}
\def\k{\mathcal{K}}
\def\V{\mathcal{V}}
\def\P{\mathcal{P}}
\def\W{\mathcal{W}}
\def\so{\mathfrak{so}}
\def\SO{\mathrm{SO}}
\def\Kl{\mathrm{Kl}}
\def\Val{\mathrm{Val}}
\def\stab{\mathrm{Stab}}
\def\spin{\mathrm{Spin}}
\def\Vst{V_{st}}
\DeclareMathOperator{\vol}{vol}
\DeclareMathOperator{\nc}{nc}
\DeclareMathOperator{\Hom}{Hom}
\DeclareMathOperator{\Char}{Char}
\DeclareMathOperator{\Gr}{Gr}
\newcommand{\riem}{\mu^\text{sec}}
\title{$\spin(9)$-invariant valuations on the octonionic plane}
\author{Andreas Bernig}
\author{Floriane Voide}
\thanks{Supported by SNF-grant PP002-114715/1 and DFG-grant BE 2484/5-1}
\address{Institut f\"ur Mathematik, Johann Wolfgang Goethe-Universit\"at Frankfurt\\
Robert-Mayer-Str. 10, 60054 Frankfurt, Germany}
\email{bernig@math.uni-frankfurt.de}
\email{voide@math.uni-frankfurt.de} 
\begin{document}

\begin{abstract}
The dimensions of the spaces of $k$-homogeneous $\spin(9)$-invariant valuations on the octonionic plane are computed using results from the theory of differential forms on contact manifolds as well as octonionic geometry and representation theory. Moreover, a valuation on
Riemannian manifolds of particular interest is constructed which yields, as a special
case, an element of $\Val_2^\spin(9)$.
\end{abstract}

\maketitle

\section{Introduction}

On an $n$-dimensional Euclidean vector space $V$, we consider the space $\k(V)$ of compact convex subsets in $V$. A {\bf valuation} is a real or complex valued functional $\mu$ on $\k(V)$ with the property that
\begin{displaymath}
\mu(K\cup L)+\mu(K\cap L)=\mu(K)+\mu(L), 
\end{displaymath}
for all $K,L\in\k(V)$ with $K\cup L\in\k(V)$.

In this article we consider only continuous (with respect to the Hausdorff topology on $\k(V)$) and translation invariant valuations; we denote by $\Val$ the vector space of these valuations.

A valuation $\mu$ is said to be {\bf $k$-homogeneous} if 
\begin{displaymath}
\mu(tK)=t^k\mu(K) 
\end{displaymath}
for every $K\in\k(V)$ and $t>0$. We denote by $\Val_k$ the subspace of $k$-homogeneous valuations in $\Val$. By a theorem of McMullen \cite{mcmullen77} we have the following decomposition
\begin{equation} \label{eq_mcmullen}
 \Val=\bigoplus_{k=0}^n\Val_k.
\end{equation}

A valuation $\mu$ is \textbf{even} if $\mu(-K)=\mu(K)$ and \textbf{odd} if $\mu(-K)=-\mu(K)$ for all $K\in\k(V)$.

Let $G$ be a compact subgroup of the special linear group $\SO(n)$ of $V$, and let $\bar G$ be the group generated by $G$ and translations. Consider the space $\Val^G\subset \Val$ of $\bar G$-invariant continuous valuations. It was shown by Alesker \cite{alesker00} that this vector space has finite dimension if and only if $G$ acts transitively on the 
unit sphere $S(V)$ in $V$. A classification of all compact connected Lie groups acting transitively and effectively on $S(V)$ was obtained by Montgomery-Samelson \cite{montgomery_samelson43} and Borel \cite{borel49}.
The list contains the series 
\begin{equation}\label{eq:series}
\SO(n),\mathrm{U}(n),\mathrm{SU}(n), \mathrm{Sp}(n),\mathrm{Sp}(n)\cdot\mathrm{U}(1),\mathrm{Sp}(n)\cdot \mathrm{Sp}(1),
\end{equation}
and the three exceptional groups
\begin{equation}\label{eq:excep}
\mathrm{G}_2,\spin(7),\spin(9).
\end{equation}

The space of continuous $\overline{\SO}(n)$-invariant valuations is described by a famous theorem of Hadwiger \cite{hadwiger_vorlesung}, which states that $\Val^{\SO(n)}$ has dimension $n+1$. A basis of this space is given by the so called intrinsic volumes. Hadwiger's theorem implies in a very straightforward way many of the most important theorems in integral geometry, compare \cite{klain_rota} for these applications.  

Hadwiger-type theorems and integral geometry of the unitary groups $\mathrm{U}(n),\mathrm{SU}(n)$ was extensively studied during the last few years \cite{alesker_mcullenconj01, bernig_sun09, bernig_fu_hig, bernig_fu_solanes, park02, wannerer_area_measures, wannerer_unitary_module}.

For the symplectic groups $\mathrm{Sp}(n),\mathrm{Sp}(n)\cdot\mathrm{U}(1),\mathrm{Sp}(n)\cdot \mathrm{Sp}(1)$, the dimensions of the spaces of invariant valuations were computed in \cite{bernig_qig}. A Hadwiger-type theorem is known only in the cases $n=1$ \cite{alesker_su2_04, bernig_sun09, bernig_quat09} (note that $\mathrm{Sp}(1) \cong \mathrm{SU}(2)$) and $n=2$ \cite{bernig_solanes}. 

The cases of $\mathrm G_2$ and $\spin(7)$ have been treated by the first named author in \cite{bernig_g2}.

Until now, only partial results have been obtained by Alesker in the case $G=\spin(9)$, acting on a $16$-dimensional space by the spin representation. Using the theory of plurisubharmonic functions in quaternionic variables, he constructed a 2-homogeneous $\spin(9)$-invariant valuation in \cite{alesker08}.

In the first section of this paper we compute the dimensions of the spaces $\Val_k^{\spin(9)}$.
\begin{MainTheorem} \label{mainthm_dimensions}
\begin{displaymath}
  \begin{array}{| c || c | c | c | c | c | c | c | c | c | c | c | c | c | c | c | c | c |} \hline
   k & 0 & 1 & 2 & 3 & 4 & 5 & 6 & 7 & 8 & 9 & 10 & 11 & 12 & 13 & 14 & 15 & 16\\ \hline
   \dim\Val_k^{\spin(9)} & 1 & 1 & 2 & 3 & 6 & 10 & 15 &20 & 27 & 20 & 15 & 10 & 6 & 3 & 2 & 1 & 1 \\ \hline
  \end{array}
\end{displaymath}
\end{MainTheorem}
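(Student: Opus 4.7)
The plan is to use the Bernig--Br\"ocker description of smooth translation-invariant valuations by differential forms on the sphere bundle $V\times S^{15}$ modulo the Rumin kernel, then reduce the dimension count to a problem in the representation theory of $\spin(9)$, which I attack via the octonionic geometry of $S^{15}$ and branching to the isotropy subgroup $\spin(7)$ of a point $p\in S^{15}$.

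First, I recall that for $0<k<16$ every smooth translation-invariant $k$-homogeneous valuation is represented by a differential form $\omega\in\Omega^{k,15-k}(V\times S^{15})^{\mathrm{tr}}$ of bidegree $(k,15-k)$, and two such forms determine the same valuation precisely when their difference is a sum of a vertical form (one containing the contact $1$-form $\alpha$ as a factor) and a form whose Rumin differential vanishes. Since $\spin(9)$ is compact, averaging together with the automatic smoothness of invariant valuations yields
\[
\dim \Val_k^{\spin(9)} = \dim\bigl[\Omega^{k,15-k}(V\times S^{15})^{\mathrm{tr}}\bigr]^{\spin(9)} - \dim R_k,
\]
where $R_k$ is the subspace of invariant trivial representatives. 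By Frobenius reciprocity, the first dimension equals that of $(\Lambda^k V^*\otimes \Lambda^{15-k}T_p^*S^{15})^{\spin(7)}$; and the restriction of the spin representation $V=\Delta_9$ to $\spin(7)$ splits as $\R\oplus V_7\oplus \Delta_7$ (trivial, standard $7$-dimensional, and spin $8$-dimensional), with $T_pS^{15}=V_7\oplus \Delta_7$. A character and plethysm computation then gives the raw dimensions $\dim[\Omega^{k,15-k}]^{\spin(9)}$ for all $k$.

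Second, I determine $R_k$ by analyzing the contact form $\alpha$ and the Rumin operator $D$ on the $\spin(9)$-invariant subspaces: both are $\spin(9)$-equivariant and hence descend to linear maps between the corresponding finite-dimensional invariant spaces. Labelling the irreducibles using the Hopf-type fibration $S^7\to S^{15}\to \O P^1=S^8$ induced by the $\spin(9)$-action produces explicit octonionic models for invariant forms and identifies the invariant contact form, from which the ranks of the induced maps can be read off. Combined with Alesker's Poincar\'e duality $\Val_k^{\spin(9)}\cong \Val_{16-k}^{\spin(9)}$ (which both halves the work and explains the symmetry of the table about $k=8$) and the trivial cases $k\in\{0,1,15,16\}$ coming from the transitivity of $\spin(9)$ on $S^{15}$, this produces all entries of the table.

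The main obstacle will be the second step. The branching and plethysm calculation, while substantial, is essentially routine representation theory; the real difficulty lies in identifying, inside the resulting $\spin(9)$-invariant spaces of $(k,15-k)$-forms, the precise images of multiplication by $\alpha$ and of the Rumin differential $D$. Here the octonionic structure of the $\spin(9)$-action on $\R^{16}$ and the associated $\spin(9)$-invariant geometry on $S^{15}$ (in particular the canonical $8$-plane distribution coming from $\O P^1$) must be exploited to give sufficiently explicit models of each invariant form, so that the ranks of these maps, and thereby the final dimensions, can be extracted.
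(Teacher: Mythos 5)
Your first step (describing $\Val_k^{\spin(9)}$ by translation-invariant differential forms on $S(\O^2)$, using Frobenius reciprocity to reduce to the isotropy subgroup $\spin(7)$, and decomposing the spin representation restricted to $\spin(7)$ as $\R\oplus\O'\oplus\O$) is essentially the setup the paper uses. The trivial cases $k\in\{0,1,15,16\}$ and the symmetry $\Val_k^{\spin(9)}\cong\Val_{16-k}^{\spin(9)}$ via the Alesker--Fourier transform are also correctly identified, though the paper does not in fact appeal to this symmetry.

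The genuine gap is in your second step, and you have named it yourself as ``the main obstacle.'' You propose to determine the kernel space $R_k$ by working out, inside the invariant pieces, the explicit images of multiplication by $\alpha$ and of the Rumin differential $D$, using octonionic models built from the fibration $S^7\to S^{15}\to S^8$. Nothing in the proposal actually does this, and it is genuinely difficult: the invariant spaces are large (dimensions up to $266$), and there is no a priori reason why the invariant Rumin differentials should have ranks one can ``read off'' from an octonionic picture. The paper sidesteps this entirely by invoking the exact sequence of \cite{bernig_qig} (Theorem~\ref{thm_exact_seq}):
\begin{displaymath}
0\to(\Lambda^kV^*)^G\otimes\C\to\Omega_p^{k,0}(SV)^G\xrightarrow{d_Q}\cdots\xrightarrow{d_Q}\Omega_p^{k,n-k-1}(SV)^G\xrightarrow{\nc}\Val_k^G\to 0,
\end{displaymath}
whose Euler characteristic vanishes. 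Combined with the identification \eqref{eq_primitive_horizontal} of primitive forms with a quotient of horizontal forms, this produces the closed formula \eqref{eq1} for $\dim\Val_k^G$ as an alternating sum of the $b_{k,l}=\dim\Omega_h^{k,l}(SV)^G$ and $b_k=\dim(\Lambda^kV)^G$. The alternating sum makes the ranks of the individual $d_Q$'s irrelevant, so the entire problem collapses to the representation-theoretic computation you already carried out in your first step, namely decomposing $\Lambda^i(\O'\oplus\O)$ over $\spin(7)$ (to get the $b_{k,l}$ by Schur's lemma) and $\Lambda^k(\O^2)$ over $\spin(9)$ (to get the $b_k$). Without this Euler-characteristic reduction, your proposal does not yield a complete proof; with it, your second step becomes unnecessary.

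There is also a minor imprecision in your description of the kernel of the normal cycle map: you say two forms determine the same valuation when their difference is ``a sum of a vertical form and a form whose Rumin differential vanishes.'' The correct statement, for translation-invariant forms in the relevant bidegree, is that the valuation is zero precisely when the Rumin differential vanishes (vertical forms are already a special case of this, since $D$ kills the ideal generated by $\alpha$ and $d\alpha$); equivalently, after passing to primitive forms, the kernel of $\nc$ is exactly the image of $d_Q$. Either way, the fact that this kernel fits into the long exact sequence above is the ingredient you are missing.
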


These dimensions are computed using the results of the first named author \cite[Theorem 2.1 and Corollary 2.2]{bernig_qig}, the description of the spin action due to Sudbery \cite{sudbery84} and facts from representation theory.

Before going on, let us compare our result with the one in \cite{bernig_g2}. The two exceptional groups $\mathrm{G}_2$ and $\spin(7)$ act not only transitively on some sphere, but isotropically, i.e. the stabilizer acts transitively on the unit sphere of the tangent space. This allows to relate invariant valuations under these groups to invariant valuations on the tangent spaces. This approach does not work for $\spin(9)$, since the action of this group on $S^{15}$ is not isotropic. Another difference is that (again in contrast to the cases $\mathrm{G}_2$ and $\spin(7)$) there are rather many new invariant valuations. Therefore it seems a difficult problem to write down an explicit Hadwiger-type theorem and to compute the kinematic formulas for this group. 

In the second part of this paper, we present a result in the more general setting of valuations on manifold. Before stating it, we have to recall the Klain embedding. 

Let $\mu$ be an even valuation of homogeneity degree $k$ on the Euclidean vector space $V$. For a $k$-plane $E\in \Gr_k(V)$, the restriction $\mu|_E$ is a continuous, translation invariant simple valuation. From a characterization theorem of Klain \cite{klain95}, it follows that $\mu|_E = c(E) \cdot \vol_E$ for some constant $c(E) \in \R$. 
\begin{Definition}
The map $\Kl_\mu:\Gr_k(V)\longrightarrow\R, \Kl_\mu(E)=c(E)$ is called the 
{\bf Klain function} of $\mu$. It is a continuous function on $\Gr_k(V)$.
\end{Definition} 

The induced map $\Kl:\Val_k^+\to C(\Gr_k(V)),\mu\mapsto \Kl_\mu$ is injective. $\Kl$ is called the {\bf Klain embedding}.

Let us now recall some notions related to Alesker's theory of valuations on manifolds \cite{alesker_val_man1, alesker_val_man2, alesker_survey07, alesker_val_man4, alesker_val_man3}. Roughly speaking, a smooth valuation on a manifold $M$ of dimension $n$ is a functional (satisfying some technical properties) on the space of smooth compact submanifolds with corners (see Section \ref{sec_valman} for the definition). The space $\mathcal{V}^\infty(M)$ of smooth valuations on $M$ admits a natural filtration  
\begin{displaymath}
 \V^\infty(M)=\W_0 \supset \W_1 \supset \cdots \supset \W_n.
\end{displaymath}
Under the action of the Euler-Verdier involution (see Section \ref{sec_valman}), each $W_k$ splits as a sum $W_k \cong W_k^+ \oplus W_k^-$. The quotient $\W_k/\W_{k+1}$ is canonically isomorphic to the space of smooth sections of the vector bundle $\Val_k(TM)$ over $M$ whose fiber at a point $p \in M$ is given by $\Val_k(T_pM)$. If $\mu \in \W_k$, we denote by $T_p^k \mu$ the image of $\mu$ in $\Val_k(T_pM)$. If $\mu \in W_k^\epsilon$ with $\epsilon:=(-1)^k$, then $T_p^k \mu \in \Val_k^+(T_pM)$ is even and can be described by its Klain function, which is a function on $\Gr_k(T_pM)$. 

Let $(M,g)$ be a Riemannian manifold, $p \in M$. The sectional curvature of $M$ at $p$ is a function $K_p \in C(\Gr_2 T_pM)$. Note that the Klain function of an even $2$-homogeneous valuation on $T_pM$ belongs to the same space. 

\begin{MainTheorem} \label{mainthm_sectional}
 On every Riemannian manifold $(M,g)$, there exists a valuation $\mu \in \W_2^+$ such that for every $p\in M$, the Klain function of the induced valuation $T_p^2\mu$ on $T_pM$ is the sectional curvature of $M$ at $p$.
\end{MainTheorem}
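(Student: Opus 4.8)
The plan is to construct $\mu$ explicitly as the smooth valuation associated to an invariant differential form on the sphere bundle $SM$, and then to compute its Klain function pointwise. Recall that by Alesker's theory every smooth valuation $\mu \in \V^\infty(M)$ can be represented by a pair $(\omega,\phi)$ where $\omega$ is a smooth $(n-1)$-form on the cosphere bundle $\mathbb{P}_M^*$ (or, using the metric, on $SM$) and $\phi$ is a smooth $n$-form on $M$, via
\begin{displaymath}
\mu(K) = \int_{N(K)} \omega + \int_K \phi,
\end{displaymath}
where $N(K)$ is the normal cycle of $K$. Since we want $\mu \in \W_2$, we take $\phi = 0$ and choose $\omega$ so that it lies in the appropriate piece of the filtration; concretely, $\omega$ should be built from the Riemannian curvature tensor so that it is a section of $\Lambda^{n-1}$ over $SM$ which is a multiple-of-the-curvature analogue of the forms defining the Lipschitz-Killing valuations. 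First I would recall from the flat theory the explicit invariant form on $S(V) \times V$ whose associated valuation on $V = \R^n$ is the even $2$-homogeneous valuation with a prescribed Klain function $f \in C(\Gr_2 V)$: this is possible because the Klain embedding $\Val_2^+ \hookrightarrow C(\Gr_2 V)$ has image characterized by Klain/Schneider, and smooth valuations correspond to smooth Klain functions. The sectional curvature at $p$, viewed as a function on $\Gr_2 T_pM$, is smooth, hence the fiberwise recipe does produce a well-defined element of $\Val_2^+(T_pM)$ for each $p$.

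The key construction step is to globalize this fiberwise recipe, i.e. to write down a single differential form $\omega$ on $SM$ (depending on the $2$-jet of the metric) which restricts on each fiber to the flat form realizing the Klain function $K_p$. Here I would use the Levi-Civita connection to split $T(SM)$ into horizontal and vertical parts, express the curvature tensor $R$ as an $\mathrm{End}(TM)$-valued $2$-form, and take $\omega$ to be a universal polynomial expression in $R$ wedged with the canonical contact and solder forms on $SM$ — the analogue of the Chern-Gauss-Bonnet integrand, but cut down to the ``$2$-homogeneous, weight-by-one-in-curvature'' component. Because the construction is natural (built only from the metric and its derivatives via the connection), $\omega$ is well-defined globally; because locally in normal coordinates at $p$ the metric is flat to second order except for the curvature term, the restriction of $\omega$ to the fiber $S_pM$ agrees with the flat model form associated to $K_p$. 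Then $T_p^2 \mu$ is precisely the flat valuation with Klain function $K_p$, which is what we want; the parity statement $\mu \in \W_2^+$ follows because the sectional curvature function is even on $\Gr_2$ (it only depends on the $2$-plane, not an orientation), so the realizing form can be chosen to transform correctly under the Euler-Verdier involution, i.e. $T_p^2\mu$ lands in $\Val_2^+$.

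A cleaner route to the same end, which I would pursue in parallel, is to avoid picking a noncanonical $\omega$ by hand and instead argue abstractly: the Klain functions of the fiberwise valuations $p \mapsto T_p^2\mu$ define a smooth section of the bundle whose fiber at $p$ is $\mathrm{Kl}(\Val_2^+(T_pM)) \subset C(\Gr_2 T_pM)$, and one must show that \emph{every} smooth section of the bundle $\Val_2^+(TM)$, i.e. every $\psi \in \Gamma(\Val_2^+(TM))$, is of the form $T_p^2\mu$ for some $\mu \in \W_2^+$ — in other words that the map $\W_2^+ \to \Gamma(\Val_2^+(TM))$, $\mu \mapsto (p \mapsto T_p^2\mu)$, is surjective. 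But this surjectivity is exactly the statement that $\W_k^\epsilon / \W_{k+1} \cong \Gamma(\Val_k^\epsilon(TM))$ from the filtration recalled in the introduction (with $\epsilon = (-1)^k = +1$ for $k=2$). So it suffices to check that the assignment $p \mapsto (K_p \in C(\Gr_2 T_pM))$ is indeed a smooth section of the subbundle of Klain functions of even $2$-homogeneous valuations — and this is immediate, since the sectional curvature depends smoothly on $p$ and, fiberwise, $C(\Gr_2 T_pM)$ is literally the target of the Klain embedding for $\Val_2^+$ (this is the observation made in the paragraph preceding Theorem \ref{mainthm_sectional}).

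The main obstacle, and the point that needs genuine care, is verifying that $K_p$ really lies in the \emph{image} of the Klain embedding, not merely in $C(\Gr_2 T_pM)$: the Klain embedding $\Val_2^+ \to C(\Gr_2)$ is injective but not surjective for $n \geq 4$ (its image is the space of functions satisfying Klain's/Schneider's conditions, related to the cosine transform / generalized Funk transform). One must therefore check that the sectional curvature function of a Euclidean space's tangent model satisfies these conditions. This is where I expect the real work to be: either (i) show directly that the curvature tensor, via the first Bianchi identity and its symmetries, produces a function in $\Gr_2$ that lies in the image of the relevant cosine/Funk-type transform, or (ii) exhibit an explicit even $2$-homogeneous valuation on $\R^n$ whose Klain function equals a prescribed algebraic curvature-type function — e.g.\ by taking suitable combinations of the valuations $K \mapsto \int_K$ of second fundamental form invariants, or the valuation $K \mapsto \int_{\partial K} \langle A v, v\rangle \, d\mathrm{area}$ and its relatives, whose Klain functions one can compute. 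Once the image condition is confirmed, the smoothness in $p$ is automatic and the theorem follows from the filtration isomorphism; the specialization to $\Val_2^{\spin(9)}$ mentioned in the abstract is then just the observation that on flat $\R^{16}$ with its $\spin(9)$-symmetric curvature-like tensors one obtains a $\spin(9)$-invariant element of $\Val_2^+$.
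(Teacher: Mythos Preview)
Your proposal is on the right track and correctly identifies the crux of the matter, but it differs from the paper's approach in a way worth noting, and it leaves the central step unresolved.

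The paper does \emph{not} argue abstractly via the filtration isomorphism $\W_2^+/\W_3 \cong C^\infty(M,\Val_2^{+,\infty}(TM))$; instead it gives a completely explicit construction. The key idea is to work on $TM$ rather than $SM$: every $n$-form $\omega$ on $TM$ defines a smooth valuation by $\mu(P)=\int_{N_1(P)}\omega$, where $N_1(P)\subset TM$ is the \emph{disc bundle} (the cone over the normal cycle). The paper then takes the curvature tensor $R_p\in\mathrm{Sym}^2\Lambda^2 T_pM$, applies Hodge-$*$ in the second factor to get $R_p^*\in\Lambda^2 T_pM\otimes\Lambda^{n-2}T_pM\subset\Lambda^n(T_pM\oplus T_pM)\cong\Lambda^n T_{(p,v)}TM$, and sets $\omega_{(p,v)}:=\kappa_{n-2}^{-1}R_p^*$, constant along fibers. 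Membership in $\W_2^+$ is then immediate from the bidegree $(2,n-2)$ of $R_p^*$ and the filtration on $\Omega^n(TM)$; the Klain computation is a direct evaluation on $\exp_p(tD^2)$ for $D^2\subset E$, using $\int_{N_1}\omega$ and the definition of $T_p^2$.

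Your ``cleaner route'' is logically valid but, as you yourself note, it reduces to the obstacle of showing that $K_p$ lies in the image of the Klain embedding --- and you do not actually carry this out. The paper's explicit form \emph{is} the verification of this: it exhibits, simultaneously and naturally in $p$, a valuation whose Klain function is $K_p$. Your suggestion (i), checking abstract conditions on the image of the cosine transform, would be more laborious; your suggestion (ii), exhibiting an explicit valuation, is exactly what the paper does, just more elegantly than the $\partial K$-integrals you sketch. In short: your abstract argument is fine as a reduction, but the missing ingredient is precisely the paper's construction, and the $TM$/disc-bundle formulation with Hodge-$*$ makes that construction far cleaner than a direct attack via forms on $SM$.
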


The example $M=\O P^2$ (octonionic projective plane) yields an even, homogeneous of degree $2$, $\spin(9)$-invariant valuation on $\O^2=\R^{16}$, which is not a multiple of the second intrinsic volume. 

\textbf{Acknowledgements.} We are very grateful to Franz Schuster and Gil Solanes for their suggestions and comments on the first drafts of this paper.

\section{Forms and valuations} 

We begin by recalling the ideas and notations from \cite{bernig_qig}, leading to Theorem \ref{thm_exact_seq} and Corollary \ref{cor_dimension_formula} below.

Let $V$ be an $n$-dimensional Euclidean vector space. The unit sphere in $V$ will be denoted by $S(V)$. The sphere bundle $SV:=V\times S(V)$ is a contact manifold with contact form given by
\begin{equation*}
 \alpha_{(x,v)}(w)=\langle v,d\pi(w)\rangle, \quad w \in T_{(x,v)}SV,
\end{equation*}
where $\pi:SV\rightarrow V$ is the canonical projection. We denote by $Q:=\mathrm{ker} \alpha$ the contact distribution.

The space of complex valued differential forms on $SV$ is denoted by $\Omega^*(SV)$. It has a bigrading
\begin{displaymath}
 \Omega^*(SV)=\bigoplus_{k,l}\Omega^{k,l}(SV),
\end{displaymath}
where $\Omega^{k,l}(SV)$ denotes the space of differential forms of bidegree $(k,l)$ on $SV$. 

\begin{Definition}\label{def:nc}
Given $K \in \k(V)$, the oriented $(n-1)$-dimensional Lipschitz submanifold of $SV$
\begin{displaymath}
 N(K):=\{(x,v)\in SV\,|\,v\ \mathrm{is\ an\ outer\ unit\ normal\ vector\ of\ }K\ \mathrm{at}\ x\}
\end{displaymath}
is called the {\bf normal cycle} of $K$.
\end{Definition}

For basic properties of $N(K)$ we refer to \cite{fu94}. 

\begin{Definition}
A translation invariant valuation $\mu \in \Val$ is called \textbf{smooth} if there exist $\omega \in \Omega^{n-1}(SV)^{tr}$ and $\varphi \in \Lambda^n V^* \otimes \C=\Omega^n(V)^{tr}$ such that
\begin{displaymath}
\mu(K)=\int_{N(K)}\omega+\int_K \varphi, 
\end{displaymath}
for every $K \in \k(V)$, where the superscript $tr$ means translation invariance.
We denote by $\Val^{\infty}$ the space of translation invariant smooth valuations.
\end{Definition}

Smooth valuations are dense in the space of continuous translation invariant valuations. In recent years, several algebraic operations like product, convolution and Alesker-Fourier transform were introduced on $\Val^\infty$ \cite{alesker04_product, alesker_fourier, bernig_aig10, bernig_fu06, fu_barcelona}. Moreover, these algebraic structures are closely related to kinematic formulas, compare \cite{bernig_fu_hig, bernig_fu_solanes, bernig_hug, wannerer_area_measures}.  

The space $\Val^\infty$ fits into an exact sequence, as was shown in \cite{rumin94} and \cite{bernig_qig} and will be explained now. We consider the following subspaces of $\Omega^{k,l}(SV)^{tr}$.

\begin{align*}
 \mathcal{I}^{k,l}(SV)^{tr} & := \{\omega\in\Omega^{k,l}(SV)^{tr}\, |\, \omega=\alpha\wedge\xi+d\alpha\wedge\psi,\, \xi\in\Omega^{k-1,l}(SV)^{tr}\\
 & \quad \psi \in \Omega^{k-1,l-1}(SV)^{tr}\}, \\
\Omega^{k,l}_v(SV)^{tr} & := \{\omega\in\Omega^{k,l}(SV)^{tr}\, |\, \alpha \wedge \omega=0\}, \\
\Omega^{k,l}_h(SV)^{tr} & := \left. \Omega^{k,l}(SV)^{tr} \middle/ \Omega^{k,l}_v(SV)^{tr} \right.,\\
\Omega^{k,l}_p(SV)^{tr} & := \left. \Omega^{k,l}(SV)^{tr} \middle/ \mathcal{I}^{k,l}(SV)^{tr}\right..
\end{align*}
The letters v,h,p stand for \textbf{vertical, horizontal, primitive} respectively. 

Let $L$ be multiplication by the $2$-form $d\alpha$. Then $L:\Omega_h^{k-1,l-1}(SV)^{tr} \to \Omega_h^{k,l}(SV)^{tr}$ is an injection for $k+l \leq n$ and 
\begin{equation} \label{eq_primitive_horizontal}
 \Omega_p^{k,l}(SV)^{tr} \cong \Omega_h^{k,l}(SV)^{tr}/L \Omega_h^{k-1,l-1}(SV)^{tr}.
\end{equation}

Let $G$ be a compact subgroup of $\SO(V)$ acting transitively on $S(V)$. By $\bar G$ we denote the group generated by $G$ and translations. By a slight abuse of notation, the superscript $G$ stands for translation and $G$-invariance. 

We consider the operators
\begin{displaymath}
 d_Q:\Omega^{k,l}_p(SV)^G \to \Omega^{k,l+1}_p(SV)^G
\end{displaymath}
induced by the exterior derivative $d:\Omega^{k,l}(SV)^G \to \Omega^{k,l+1}(SV)^G$, and
 
\begin{align*}
 \nc:\Omega_p^{k,n-k-1}(SV)^G & \rightarrow \Val_k^G\\
\omega & \mapsto \int_{N(\cdot)}\omega.
\end{align*}

This map is well defined, since $N(K)$ is a legendrian cycle, hence vanishes on $\mathcal{I}^{k,n-k-1}(SV)^G$.

\begin{Theorem}[\cite{bernig_qig}]\label{thm_exact_seq}
For $0<k<n$, the sequence 
\begin{multline*}
\xymatrix{
0\ar[r] &(\Lambda^kV^*)^G \otimes \C \ar[r] & \Omega_p^{k,0}(SV)^G\ar[r]^-{d_Q} & \cdots} \\
\xymatrix{\qquad\qquad\qquad\qquad\qquad\qquad \cdots \ar[r]^-{d_Q} &\Omega_p^{k,n-k-1}(SV)^G \ar[r]^-{\nc} & 
\Val_k^G \ar[r] &0
} 
\end{multline*}
is exact.
\end{Theorem}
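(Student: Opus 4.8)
The plan is to reduce the statement to the Rumin complex on the contact manifold $SV$ and then to use the normal cycle map and Rumin's exactness result. Recall that the Rumin differential $d_Q$ acts on the primitive forms $\Omega_p^{k,l}(SV)$, and on a compact contact manifold the induced complex computes de Rham cohomology with a degree shift (see \cite{rumin94}). Here $SV = V \times S(V)$ is not compact, but after imposing translation invariance we are effectively on $S(V) \cong S^{n-1}$, and the relevant cohomology is that of the sphere. The first step is therefore to recall from \cite{bernig_qig} the precise form of the Rumin-type complex in the translation-invariant bigraded setting: for fixed vertical degree count, the forms $\Omega_p^{k,\bullet}(SV)^{tr}$ with differential $d_Q$ form a complex, and one has an isomorphism relating its cohomology to $H^*(S^{n-1})$ together with the term $(\Lambda^k V^*)^{tr} \otimes \C$ sitting at the left end (this is exactly the content of the exact sequence before passing to $G$-invariants).

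Second, I would invoke the fact, due to Alesker and reproved via normal cycles in \cite{bernig_qig}, that the map $\nc \colon \Omega_p^{k,n-k-1}(SV)^{tr} \to \Val_k^\infty$ is surjective, with kernel precisely the image of $d_Q$. Combined with the left-end identification, this gives the exact sequence at the level of $tr$-invariant (non-$G$-invariant) forms; the new content of Theorem \ref{thm_exact_seq} is that the whole sequence is still exact after applying the exact functor $(\cdot)^G$ of taking $G$-invariants. Here the key point is that $G$ is compact, so averaging over $G$ (Haar measure) provides a projection $\Omega^{k,l}(SV)^{tr} \to \Omega^{k,l}(SV)^G$ that is a morphism of complexes and commutes with all the maps $d_Q$, $\nc$, $L$, and wedging with $\alpha$ and $d\alpha$ (these are all $G$-equivariant since $G \subset \SO(V)$ preserves the contact structure on $SV$). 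Taking invariants under a compact group is exact on the category of (reasonable, e.g. finite-dimensional or Fréchet) representations, so exactness of the $tr$-sequence passes to exactness of the $G$-sequence term by term.

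Third, I would need to check the two endpoints carefully. At the left end, $(\Lambda^k V^*)^G \otimes \C$ injects into $\Omega_p^{k,0}(SV)^G$ because a translation-invariant $k$-form on $V$, pulled back to $SV$ via $\pi$, is horizontal and primitive of vertical degree $0$, and it lies in the kernel of $d_Q$ iff it is closed, which for a constant-coefficient form means it is already the pullback of an element of $\Lambda^k V^*$; this identifies $\ker d_Q$ in bidegree $(k,0)$ with $(\Lambda^k V^*)^{tr}$, and taking $G$-invariants gives $(\Lambda^k V^*)^G$. At the right end, exactness at $\Val_k^G$ is the surjectivity of $\nc$ restricted to $G$-invariant forms, which follows from surjectivity in the $tr$-setting plus the averaging argument: given $\mu \in \Val_k^G$, pick any $\omega$ with $\nc(\omega) = \mu$ and average $\omega$ over $G$; since $\mu$ is $G$-invariant and $\nc$ is $G$-equivariant, the averaged form still maps to $\mu$.

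The main obstacle I anticipate is the functional-analytic bookkeeping needed to justify that $(\cdot)^G$ is exact in the relevant category: the spaces $\Omega_p^{k,l}(SV)^{tr}$ are infinite-dimensional Fréchet spaces, and one must know that the $tr$-sequence consists of closed-range maps (so that the cohomology is Hausdorff) before averaging gives an honest exact sequence of $G$-invariants, rather than something only exact "up to closure". Since the sequence already appears in \cite{bernig_qig}, I would simply cite that paper for the $tr$-exactness and closed-range properties and restrict the new work to the (essentially formal) averaging step; alternatively, one can first reduce to a genuinely finite-dimensional statement by noting that $\Omega^{k,l}(SV)^G$ is finite-dimensional whenever $G$ acts transitively on $S(V)$ (since then $SV/\bar G$ is compact, indeed a single orbit up to the base), which sidesteps the analytic subtlety entirely and is the natural setting anyway given the hypothesis on $G$.
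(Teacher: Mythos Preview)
The paper does not actually prove this theorem; it is quoted from \cite{bernig_qig}, and the only commentary the paper adds is the sentence immediately following the statement: exactness on the left comes from Rumin \cite{rumin94}, exactness on the right from \cite{bernig_broecker07}. Your sketch is consistent with that attribution and is a correct outline of how the argument in \cite{bernig_qig} goes: Rumin's computation of the cohomology of the primitive complex handles the middle and left part, the description of the kernel of $\nc$ from \cite{bernig_broecker07} handles the right end, and passage to $G$-invariants is justified by averaging over the compact group $G$. Your final remark that transitivity of $G$ on $S(V)$ makes all the spaces $\Omega^{k,l}(SV)^G$ finite-dimensional is exactly the right way to dispose of the functional-analytic worry, and is the setting assumed throughout.

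One small inaccuracy worth flagging: at the left end you write that a pulled-back form ``lies in the kernel of $d_Q$ iff it is closed, which for a constant-coefficient form means it is already the pullback of an element of $\Lambda^k V^*$''. Every translation-invariant form on $V$ is already closed and already such a pullback, so this sentence is circular as stated; the actual content at the left end is identifying $\ker\big(d_Q:\Omega_p^{k,0}(SV)^{tr}\to\Omega_p^{k,1}(SV)^{tr}\big)$ with $\pi^*(\Lambda^k V^*\otimes\C)$ inside the primitive quotient, which is part of Rumin's computation rather than an elementary observation. This does not affect the overall correctness of your plan, since you are in any case deferring that step to \cite{rumin94} and \cite{bernig_qig}.
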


Note that exactness on the left part of the sequence was shown by Rumin \cite{rumin94}, while exactness of the right hand side is a consequence of results in \cite{bernig_broecker07}.

The theorem and \eqref{eq_primitive_horizontal} yield the following corollary. 

\begin{Corollary}[\cite{bernig_qig}]\label{cor_dimension_formula}
 For $0\leq k,l\leq n$, set
\begin{align*}
 b_k & :=\dim(\Lambda^k V)^G,\\
 b_{k,l} & :=\dim \Omega^{k,l}_h(SV)^G,
\end{align*}
and $b_k:=0$, $b_{k,l}:=0$ for other values of $k$ and $l$. Then for $0\leq k\leq n$ :
\begin{equation}\label{eq1}
 \dim \Val_k^G=\sum_{l=0}^{n-k-1}(-1)^{n-k-l-1}(b_{k,l}-b_{k-1,l-1})+(-1)^{n-k}b_k.
\end{equation}
\end{Corollary}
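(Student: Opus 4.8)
The plan is to read the dimension formula straight off the exact sequence of Theorem~\ref{thm_exact_seq} by an Euler characteristic count, after using \eqref{eq_primitive_horizontal} to convert the dimensions of the primitive spaces into dimensions of horizontal ones.

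First I would pass to $G$-invariants in \eqref{eq_primitive_horizontal}. Since $G$ is compact the functor of taking $G$-invariants is exact, and the operator $L$ (multiplication by $d\alpha$) is $G$-equivariant, so
\[
\Omega_p^{k,l}(SV)^G \cong \Omega_h^{k,l}(SV)^G \big/ L\,\Omega_h^{k-1,l-1}(SV)^G .
\]
In the exact sequence of Theorem~\ref{thm_exact_seq} only indices with $0\le l\le n-k-1$ occur, hence $k+l\le n-1<n$, so on the relevant horizontal spaces $L$ is injective and the above quotient has dimension equal to the difference of dimensions. Thus $\dim\Omega_p^{k,l}(SV)^G=b_{k,l}-b_{k-1,l-1}$ for all $l$ in this range, with the stated conventions $b_{k,l}=0$ outside the admissible range.

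Next, for $0<k<n$, the alternating sum of dimensions along the exact sequence of Theorem~\ref{thm_exact_seq} vanishes. The leftmost nonzero term $(\Lambda^kV^*)^G\otimes\C$ has dimension $b_k$ (the metric identifies $V$ with $V^*$ as $G$-modules, so $\dim(\Lambda^kV^*)^G=\dim(\Lambda^kV)^G=b_k$), the term $\Omega_p^{k,l}(SV)^G$ sits in homological degree $l+1$, and $\Val_k^G$ in degree $n-k+1$. The Euler relation is therefore
\[
(-1)^{n-k}\dim\Val_k^G \;=\; b_k-\sum_{l=0}^{n-k-1}(-1)^l\,\dim\Omega_p^{k,l}(SV)^G .
\]
Multiplying by $(-1)^{n-k}$, substituting $\dim\Omega_p^{k,l}(SV)^G=b_{k,l}-b_{k-1,l-1}$, and using that $(-1)^{n-k-l-1}$ and $(-1)^{n-k+l+1}$ coincide yields precisely \eqref{eq1}.

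Finally, the two boundary degrees are handled separately: $\Val_0^G$ and $\Val_n^G$ are one-dimensional, spanned by the Euler characteristic and by the volume respectively. For $k=n$ the sum in \eqref{eq1} is empty and $b_n=\dim(\Lambda^nV)^G=1$, so the formula holds; for $k=0$ one runs the same argument for the (equally exact) sequence $0\to\C\to\Omega_p^{0,0}(SV)^G\xrightarrow{d_Q}\cdots\xrightarrow{d_Q}\Omega_p^{0,n-1}(SV)^G\xrightarrow{\nc}\Val_0^G\to 0$. The computation is essentially bookkeeping; the only points that need care are pinning down the sign conventions and the homological positions of the terms in the Euler relation, and checking that $L$ is injective on exactly the range of bidegrees appearing in the sequence — the passage to $G$-invariants is harmless because $G$ is compact.
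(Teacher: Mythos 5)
Your proof is correct and follows essentially the same route as the paper, which obtains \eqref{eq1} by taking the alternating sum of dimensions in the exact sequence of Theorem \ref{thm_exact_seq} and substituting $\dim\Omega_p^{k,l}(SV)^G=b_{k,l}-b_{k-1,l-1}$ via \eqref{eq_primitive_horizontal} (the passage to $G$-invariants being harmless by compactness). Your separate treatment of the boundary cases $k=0$ and $k=n$, which fall outside the stated range of Theorem \ref{thm_exact_seq}, is a reasonable and correct completion of the bookkeeping.
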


The first named author \cite{bernig_qig} used this corollary to determine the dimensions of the spaces of invariant valuations on quaternionic vector spaces. In the present paper, we will study the case of the octonionic plane $\O^2$.

\section{The group $\spin(9)$} \label{sec_spin9}

Let us first recall the definition of the spin groups $\spin(n)$ for general $n$.

On a vector space $V$ with dimension $n$ we consider the special orthogonal group $\SO(V) \cong \SO(n)$ of $V$.

It is well known that the fundamental group of $ \SO(n)$ is given by $\pi_1(\SO(n))=\mathbb{Z}/2\mathbb{Z}$ for $n\geq 3$. This implies that, for $n\geq3,$ $\SO(n)$ has a connected double covering, called the {\bf spin group} $\spin(n)$.

Explicit constructions and descriptions of these groups can be found in \cite{broecker_tomdieck, fulton_harris91}. Note that the Lie algebra of $\spin(n)$ equals the Lie algebra of $\SO(n)$, that is the space $\so(n)$ of anti-symmetric matrices.    

The group $\spin(9)$ admits a representation on a $16$-dimensional space, called \textbf{spin representation}. We will use the following description of the spin representation on the Lie algebra level due to Sudbery (\cite{sudbery84}). Recall that $\O$ denotes the 8-dimensional normed division algebra of the octonions. $\O$ is neither commutative nor associative. However, it is alternative, i.e. for all $a,b,c\in\O$,
\begin{displaymath}
[a,b,c]=-[b,a,c], 
\end{displaymath}
where $[a,b,c]:=a(bc)-(ab)c$ is the \textbf{associator}. The center of $\O$ is $\R$ and we denote by $\O'$ its orthogonal complement in $\O$, i.e. the $7$-dimensional space  of pure octonions.

\begin{Proposition}[Triality principle, \cite{sudbery83}]
 For $T\in\so(\O)$, there exist unique elements $T^\sharp,T^\flat\in\so(\O)$ satisfying the following generalization of the derivation equation:
\begin{displaymath}
 T(xy)=(T^\sharp x)y+x(T^\flat y).
\end{displaymath}
\end{Proposition}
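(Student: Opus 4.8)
The plan is to establish uniqueness first, by a short argument using only that $\O$ is alternative with nucleus $\R$, and then to obtain existence from a dimension count after writing down enough explicit solutions. For uniqueness, I would start from two realizations $B,C,B',C'\in\so(\O)$ of $T$, set $P:=B-B'$ and $Q:=C-C'$, and subtract to get $(Px)y=-x(Qy)$ for all $x,y\in\O$. Specializing $y=1$ gives $P=-R_q$ with $q:=Q(1)$, and specializing $x=1$ gives $Q=L_q$; the relation then reads $(xq)y=x(qy)$, i.e.\ $[x,q,y]=0$ for all $x,y$. Since the associator of an alternative algebra is alternating, this puts $q$ in the nucleus of $\O$, which is $\R 1$; hence $P=-R_q$ is a scalar multiple of the identity, and being skew-symmetric it must be $0$, so $P=Q=0$.

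For existence, observe that the defining equation is linear in the triple $(T,T^\sharp,T^\flat)$, so the set $\mathcal{S}$ of all $T\in\so(\O)$ admitting $T^\sharp,T^\flat\in\so(\O)$ is a linear subspace of $\so(\O)$; the aim is to show $\mathcal{S}=\so(\O)$. I would exhibit three families inside $\mathcal{S}$: first, every derivation $D\in\der(\O)=\g_2$, with $D^\sharp=D^\flat=D$, contributing the $14$-dimensional subspace $\der(\O)\subset\so(\O)$; second, for $a\in\O'$ the map $L_a$, since the alternating associator rewrites $[a,x,y]=(xa)y-x(ay)$ and hence $a(xy)=(ax)y+[a,x,y]=\bigl((L_a+R_a)x\bigr)y+x\bigl((-L_a)y\bigr)$, so that $L_a^\sharp=L_a+R_a$ and $L_a^\flat=-L_a$, both skew because $a\in\O'$; third, symmetrically, $R_a$ for $a\in\O'$, with $R_a^\sharp=-R_a$ and $R_a^\flat=L_a+R_a$. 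It then remains to check that $\der(\O)+\{L_a\mid a\in\O'\}+\{R_a\mid a\in\O'\}$ is a direct sum, hence of dimension $14+7+7=28=\dim\so(\O)$, which forces $\mathcal{S}=\so(\O)$. For directness: from $D+L_a+R_b=0$, evaluating at $1\in\O$ gives $a+b=0$, so $D=R_a-L_a$ would have to be a derivation; but the uniqueness already proved forces its $\sharp$-part $R_a^\sharp-L_a^\sharp=-L_a-2R_a$ to coincide with $D=R_a-L_a$, whence $R_a=0$ and $a=0$, so $D=0$.

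The substantive ingredients, and the only real obstacle, are octonionic: that the associator of $\O$ is alternating (so the terms $[a,x,y]$ reassemble into the required shape $(Bx)y+x(Cy)$) and that the nucleus of $\O$ is $\R$; once these are available, all the verifications above are short algebraic identities. An alternative, less computational route to existence would be to recognize the linearized relation as the Lie algebra of the group $\{(A,B,C)\in\Gl(\O)^3\mid A(xy)=B(x)C(y)\}$, which by the middle Moufang identity $(ux)(yu)=u(xy)u$ contains the elements $(L_uR_u,L_u,R_u)$ for $u\in S^{7}$; but the explicit construction above seems shortest.
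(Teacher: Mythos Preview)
The paper does not prove this proposition; it is simply quoted from Sudbery \cite{sudbery83}. So there is no ``paper's proof'' to compare against, and your proposal stands on its own.

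Your argument is correct. The uniqueness step is clean: from $(Px)y=-x(Qy)$ you correctly extract $P=-R_q$, $Q=L_q$ with $q=Q(1)$, and then $[x,q,y]=0$ forces $q$ into the nucleus $\R$, whence $P$ is a real multiple of the identity and thus, being skew, zero. (Note that your associator identities use the paper's convention $[a,b,c]:=a(bc)-(ab)c$, which is the negative of the convention in many textbooks; with this convention your signs are consistent.)

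For existence, your three families and the dimension count $14+7+7=28=\dim\so(8)$ do the job. The directness argument is fine and not circular: once you know $D,L_a,R_a\in\mathcal S$ individually, uniqueness makes $(\cdot)^\sharp$ a well-defined linear map on their span, and comparing $D^\sharp=D$ with $(R_a-L_a)^\sharp=-L_a-2R_a$ forces $3R_a=0$. One cosmetic point: you invoke $\dim\der(\O)=14$ without comment; this is the standard identification $\der(\O)\cong\mathfrak{g}_2$, which you should state explicitly as an input.

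What you have written is in fact a proof of the infinitesimal triality decomposition $\so(8)=\der(\O)\oplus L_{\O'}\oplus R_{\O'}$, which is the usual way this result is packaged; Sudbery's original argument proceeds along similar lines.
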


\begin{Theorem}[\cite{sudbery84}]\label{action}
\begin{enumerate}
 \item The Lie algebra $\so(9)$ of $\spin(9)$ can be represented as
\begin{equation*}
 \so(9)=A_2'(\O)\oplus \so(\O'),
\end{equation*}
where $A_2'(\O)$ is the set of traceless antihermitian $2\times 2$ matrices with entries in $\O$.
\item The spin representation $\rho$ of $\so(9)$ on 
\begin{displaymath}
 S:=\{2\times1\ \mathrm{column\ vectors\ with\ entries\ in\ } \O\} \cong \O^2\cong\R^{16}
\end{displaymath}
is given by
\begin{align*}
\rho(A)(x) & :=A\cdot x,\quad A\in A_2'(\O) \quad \text{(matrix multiplication)},\\
\rho(T)(x) & :=T^\sharp x,\quad T\in \so(\O') \quad \text{(componentwise action)},
\end{align*}
where $T^\sharp\in\so(\O') \subset \so(\O)$ is uniquely defined by the triality principle.
\end{enumerate}
\end{Theorem}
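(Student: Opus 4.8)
The plan is to realise $S=\O^2$ concretely as a Clifford module and then to read off the two summands of $\so(9)$ from the resulting quadratic Clifford products; the triality principle will be exactly the tool needed to match one of those summands with $\so(\O')$.

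First I would settle the bookkeeping. A traceless antihermitian $2\times 2$ octonionic matrix has the form $\left(\begin{smallmatrix} a & b \\ -\bar b & -a \end{smallmatrix}\right)$ with $a\in\O'$ and $b\in\O$, so $\dim A_2'(\O)=7+8=15$; since $\dim\so(\O')=\dim\so(7)=21$ and $\dim\O^2=16=2^{4}$, the two sides of the claimed decomposition add up to $36=\dim\so(9)$, and the putative module has the dimension of the spin representation. Next, for $v\in\O$ set $\gamma_v:=\left(\begin{smallmatrix} 0 & L_v \\ -L_{\bar v} & 0 \end{smallmatrix}\right)\in\mathrm{End}(\O^2)$, where $L_v$ denotes left multiplication by $v$. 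Because $\O$ is alternative, any two of its elements generate an associative subalgebra, and this yields the polarised identity $L_{\bar u}L_v+L_{\bar v}L_u=2\langle u,v\rangle\,\mathrm{Id}$; consequently $\gamma_u\gamma_v+\gamma_v\gamma_u=-2\langle u,v\rangle\,\mathrm{Id}$. Thus the $\gamma_v$ make $\O^2$ a module over $\mathrm{Cl}(0,8)\cong\mathrm{Mat}_{16}(\R)$, necessarily the irreducible one. Via the standard isomorphism $\mathrm{Cl}^0(9)\cong\mathrm{Cl}(0,8)$ this exhibits $\O^2$ as the unique $16$-dimensional $\mathrm{Cl}^0(9)$-module, hence as the spin representation of $\spin(9)\subset\mathrm{Cl}^0(9)$, and $\so(9)$ sits inside $\mathrm{End}(\O^2)$ as $\mathrm{span}\{\gamma_v:v\in\O\}$ (dimension $8$) together with $\mathrm{span}\{\gamma_u\gamma_v\}$ over an orthonormal basis (dimension $28$).

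It then remains to match these two spans with $A_2'(\O)$ and $\so(\O')$. A direct computation gives $\gamma_u\gamma_v=\mathrm{diag}(-L_uL_{\bar v},-L_{\bar u}L_v)$. Taking $1$ in the orthonormal basis, one finds $\gamma_1\gamma_a=\mathrm{diag}(L_a,-L_a)$ for $a\in\O'$, which together with the $\gamma_b$ ($b\in\O$) is precisely the matrix-multiplication action of $A_2'(\O)$: the diagonal entry $a$ contributes $\gamma_1\gamma_a$ and the off-diagonal entry $b$ contributes $\gamma_b$. For $u,v\in\O'$ one has $\bar u=-u$ and $\bar v=-v$, so $\gamma_u\gamma_v=\mathrm{diag}(L_uL_v,L_uL_v)$ acts by the \emph{same} operator $L_uL_v\in\so(\O)$ on both components, and the span of these (for $u\neq v$ in an orthonormal basis of $\O'$) is $21$-dimensional. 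Here the triality principle is decisive: the assignment sending $T\in\so(\O')$, acting on $\O'$, to the corresponding operator on $\O$ is exactly the map $\sharp$, and one checks that $\sharp$ restricts to a Lie-algebra isomorphism of $\so(\O')$ onto $\mathrm{span}\{L_uL_v:u,v\in\O'\}$ — for instance $\der(\O)\subset\so(\O')$ is fixed by $\sharp$, while $\so(\O')/\der(\O)\cong\O'$ accounts for the remaining seven dimensions. Since $\rho$ is injective on each summand and the image of $A_2'(\O)$ (which has an off-diagonal block) meets that of $\so(\O')$ (which is purely diagonal, acting equally on both blocks) only in $0$, the space $\rho(A_2'(\O))\oplus\rho(\so(\O'))$ is $36$-dimensional and fills out the copy of $\so(9)$ found above; this yields both $\so(9)=A_2'(\O)\oplus\so(\O')$ and the asserted formulas for $\rho$.

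The main obstacle is this last identification: proving that the explicit $21$-dimensional subspace $\mathrm{span}\{L_uL_v:u,v\in\O'\}\subset\so(\O)$ is exactly the image of $\so(\O')$ under the triality map $\sharp$ and that $\sharp$ is there a Lie-algebra isomorphism. This forces genuine use of the triality identity $T(xy)=(T^\sharp x)y+x(T^\flat y)$ and its iterates, together with the non-trivial octonionic identities supplied by alternativity, and one must keep all sign conventions consistent so that the Clifford system really produces the compact form $\so(9)$ rather than some $\so(p,q)$.
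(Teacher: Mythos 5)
The paper offers no proof of this theorem to compare against: it is imported from Sudbery \cite{sudbery84} (with related computations deferred to \cite{voide_thesis}), so your proposal has to stand on its own. Most of it does: the polarized alternativity relation $L_{\bar u}L_v+L_{\bar v}L_u=2\langle u,v\rangle\,\mathrm{Id}$ and the resulting Clifford relations for the $\gamma_v$ are correct, the identification of $\O^2$ with the irreducible $\mathrm{Cl}(0,8)\cong\mathrm{Cl}^0(9)$-module (hence with the spin representation) is standard, $\so(9)$ is indeed realized as $\mathrm{span}\{\gamma_v\}\oplus\mathrm{span}\{\gamma_u\gamma_v\}$, and the computations $\gamma_u\gamma_v=\mathrm{diag}(-L_uL_{\bar v},-L_{\bar u}L_v)$, $\gamma_1\gamma_a=\mathrm{diag}(L_a,-L_a)$, matching $\mathrm{span}\{\gamma_b,\gamma_1\gamma_a\}$ with matrix multiplication by $A_2'(\O)$, as well as the final trivial-intersection and dimension argument, are all sound.

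The genuine gap is the step you yourself flag: the identity $\{T^\sharp\colon T\in\so(\O')\}=\mathrm{span}\{L_uL_v\colon u,v\in\O'\ \text{orthogonal}\}$ is asserted, not proved, and the justification offered (derivations are fixed by $\sharp$, plus a count of the remaining seven dimensions) cannot establish it. The algebra $\so(\O)$ contains many mutually conjugate $21$-dimensional subalgebras isomorphic to $\so(7)$ --- the vector copy $\so(\O')$ itself is one, and it is certainly \emph{not} equal to $\mathrm{span}\{L_uL_v\}$ --- so isomorphism type plus dimension does not identify which copy you have; moreover even the inclusion $\der(\O)\subset\mathrm{span}\{L_uL_v\}$ that your remark tacitly uses is unproved at that point. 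To close the gap one must compute $\sharp$ on generators: linearized alternativity gives, for $a\in\O'$, the identities $a(xy)=(ax)y+(xa)y-x(ay)$ and $(xy)a=-(xa)y+x(ay)+x(ya)$, i.e.\ $L_a^\sharp=L_a+R_a$ and $R_a^\sharp=-R_a$; since $D^\sharp=D$ for $D\in\der(\O)$ and $\so(\O')=\der(\O)\oplus\{L_a-R_a\colon a\in\O'\}$, one gets $(\so(\O'))^\sharp=\der(\O)\oplus\{L_a+2R_a\colon a\in\O'\}$ (note in passing that $T^\sharp$ generally does \emph{not} lie in $\so(\O')$). One must then check that each $L_uL_v$ lies in this space: in a Cayley--Dickson basis a short trace computation, using $\mathrm{tr}(L_aL_b)=\mathrm{tr}(R_aR_b)=-8\langle a,b\rangle$ and $\mathrm{tr}(L_aR_b)=4\langle a,b\rangle$ for pure $a,b$, shows $L_iL_j\in\der(\O)+\tfrac13\left(L_k+2R_k\right)$; $\mathrm{G}_2$-equivariance of the decomposition $\so(\O)=\der(\O)\oplus L_{\O'}\oplus R_{\O'}$ and transitivity of $\mathrm{G}_2$ on orthonormal pairs in $\O'$ then give all pairs, and your count $\dim\mathrm{span}\{L_uL_v\}=21$ upgrades the inclusion to the required equality. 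With that lemma supplied, your argument becomes a complete proof; without it, the crucial matching of the diagonal block with Sudbery's $T\mapsto T^\sharp$ action is unsupported.
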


\begin{Corollary}
The stabilizer of the $\spin(9)$-action on $S^{15}$ is $\spin(7)$. Its action on the tangent space $T_{(1,0)}\S^{15}=\O'\oplus\O$ is the sum of the standard representation on $\O'=\R^7$ and the spin representation on $\O=\R^8$. 
\end{Corollary}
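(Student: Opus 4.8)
The plan is to identify the stabilizer explicitly and then unwind the description of the spin action in Theorem~\ref{action} to read off the isotropy representation. First I would fix the base point $x_0 = (1,0)^T \in S \cong \O^2$, i.e.\ the column vector with first entry $1 \in \O$ and second entry $0$. The stabilizer $\mathrm{Stab}(x_0) \subset \spin(9)$ is a closed subgroup; since $\spin(9)$ acts transitively on $S^{15}$ with $\dim \spin(9) = 36$ and $\dim S^{15} = 15$, the stabilizer has dimension $21$, which already matches $\dim \spin(7)$. To pin down that it is precisely $\spin(7)$ (and not some other $21$-dimensional subgroup), I would work at the Lie algebra level using the decomposition $\so(9) = A_2'(\O) \oplus \so(\O')$: an element $A + T$ with $A \in A_2'(\O)$, $T \in \so(\O')$ fixes $x_0$ to first order iff $\rho(A)(x_0) + \rho(T)(x_0) = 0$. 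By Theorem~\ref{action}(2), $\rho(T)(x_0) = (T^\sharp 1, 0)^T$, and $T^\sharp$ is a derivation-type operator; evaluating the triality identity $T(xy) = (T^\sharp x)y + x(T^\flat y)$ at $x = y = 1$ shows $T^\sharp 1 = 0$ automatically (since $T(1) = 0$ as $T \in \so(\O')$ annihilates $\R$, and a short argument with the identity forces $T^\sharp 1 = T^\flat 1 = 0$). Meanwhile $\rho(A)(x_0)$ is the first column of the traceless antihermitian matrix $A$, which vanishes iff $A = 0$ (a traceless antihermitian $2\times 2$ octonionic matrix with vanishing first column is zero: the $(1,1)$ entry is a purely imaginary octonion appearing with a sign in the $(2,2)$ slot, and the $(2,1)$ entry is the conjugate of the $(1,2)$ entry). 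Hence the Lie algebra of the stabilizer is exactly $\{T \in \so(\O') \mid T^\sharp 1 = 0\}$, and one recognizes this as (a copy of) $\so(7)$'s spin cover inside $\so(9)$ — more precisely, the subalgebra $\so(\O')$ sits in $\so(9)$ via $T \mapsto T^\sharp$, and its integrated group is $\spin(7)$ acting on $\O$ through the spin representation. This identifies $\mathrm{Stab} = \spin(7)$.

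Next I would compute the isotropy action on $T_{x_0} S^{15}$. Identify $T_{x_0}\O^2 \cong \O^2 \cong \O \oplus \O$, with the first summand corresponding to variations of the first coordinate and the second to variations of the second; the subspace tangent to $S^{15}$ is the orthogonal complement of $x_0 = (1,0)$, namely $\O' \oplus \O$ (pure octonions in the first slot, all of $\O$ in the second). The stabilizer algebra acts by the differential of $\rho$. For the $\so(\O')$ part $T$ with $T^\sharp 1 = 0$: on the first coordinate it acts by $v \mapsto T^\sharp v$, which restricted to $\O'$ is an element of $\so(\O')$ — and as $T$ ranges over the stabilizer algebra this is precisely the standard $7$-dimensional representation of $\spin(7)$ (it factors through $\SO(7)$); on the second coordinate it acts by $w \mapsto T^\sharp w$ on all of $\O$, which is the spin representation of $\spin(7)$ on $\R^8$. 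So $T_{x_0}S^{15} \cong \O' \oplus \O = \R^7 \oplus \R^8$ as claimed, with $\spin(7)$ acting by the standard representation on the first factor and the spin representation on the second.

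The main obstacle I expect is bookkeeping in the non-associative setting: one must be careful that the two natural actions of a "derivation" $T$ on $\O$ — via $T^\sharp$ and via $T^\flat$ — are not equal, and that the triality relations $T^{\sharp\sharp}$, etc., behave as needed; concretely, verifying that $T \mapsto T^\sharp$ is an injective Lie algebra homomorphism $\so(\O') \to \so(9)$ whose image integrates to the full $\spin(7)$ (rather than $\SO(7)$ or $\G_2$), and that its restriction to $\O'$ is the vector representation while its action on all of $\O$ is the spin representation, requires invoking Sudbery's triality results carefully. A clean way around explicit octonionic computation is a dimension-and-representation-type count: the two summands $\O'$ and $\O$ are inequivalent irreducible $\spin(7)$-modules of dimensions $7$ and $8$, and there is a unique such decomposition of a $15$-dimensional isotropy module for $\spin(7)$ among the possibilities, so once the stabilizer is identified as $\spin(7)$ and one knows the action preserves the splitting $\O' \oplus \O$ (which is visible from the block structure of the $\spin(9)$-action on $\O^2$), the description of each summand follows from the general representation theory of $\spin(7)$ together with the observation that the first-slot action factors through $\SO(7)$ while the second-slot action does not. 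I would present the argument in this streamlined form, relegating the octonionic identities to a reference to \cite{sudbery84}.
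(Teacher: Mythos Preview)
The paper itself gives no proof of this corollary; it simply refers to \cite{voide_thesis} for the computations. So there is no in-paper argument to compare against, and I focus on the correctness of your approach.

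Your overall strategy is right, but the explicit Lie-algebra computation has a genuine gap. You claim that $T^\sharp(1)=0$ for every $T\in\so(\O')$, deriving this from the triality identity at $x=y=1$. That substitution only yields $T^\sharp(1)+T^\flat(1)=T(1)=0$, not $T^\sharp(1)=0$ individually, and in fact the conclusion is false: the map $T\mapsto T^\sharp$ is an outer triality automorphism of $\so(\O)=\so(8)$, and it carries the vector copy $\so(\O')=\{T:T(1)=0\}$ to a \emph{different}, spinor, copy of $\so(7)$, one that acts irreducibly on $\O$ and hence does not annihilate $1$. Concretely, $\{T\in\so(\O'):T^\sharp(1)=0\}$ is only the $14$-dimensional $\mathfrak{g}_2$, not the $21$-dimensional $\so(\O')$. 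Consequently the stabilizer algebra is not the summand $\so(\O')\subset\so(9)$; it is the twisted copy $\{(A_T,T):T\in\so(\O')\}$ with $A_T=\mathrm{diag}\bigl(-T^\sharp(1),\,T^\sharp(1)\bigr)\in A_2'(\O)$, and the isotropy action acquires a nontrivial contribution from $A_T$. Note the internal inconsistency in your write-up: if $T^\sharp(1)=0$ really held, then $T^\sharp$ acting on the second factor $\O$ would fix $1$, and that factor would carry the reducible representation $\R\oplus\O'$ rather than the irreducible spin representation you are asserting.

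Your final ``streamlined'' paragraph, by contrast, is sound and is how I would actually present the argument: the homotopy sequence of the fibration $\stab\hookrightarrow\spin(9)\to S^{15}$ shows the $21$-dimensional stabilizer is connected and simply connected, hence $\spin(7)$; the splitting $\O'\oplus\O$ is preserved because $\O'$ is tangent to the Hopf fibre $\{(a,0):|a|=1\}\cong S^7$; and then the $7$- and $8$-dimensional summands are pinned down by the short list of orthogonal $\spin(7)$-representations in those dimensions, using that the action on the second factor does not fix a line. Lead with that and drop the flawed $T^\sharp(1)=0$ step.
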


For the details of these computations, we refer to \cite{voide_thesis}.

Let us also recall some facts from the representation theory for odd dimensional orthogonal Lie algebras, again referring to \cite{broecker_tomdieck, fulton_harris91} for details.

An irreducible representation of $\so(2m+1)$ can be represented by an element of the lattice $\Lambda\subset \R^m$ generated by $L_1,\ldots,L_m$ and $(L_1+\cdots+L_m)/2$, i.e. by an element of the form
\begin{displaymath}
 \sum_{i=1}^m\lambda_iL_i
\end{displaymath}
with $\lambda_1 \geq \cdots \geq\lambda_m\geq0$ and the $\lambda_i$'s are either all integers or all half integers. We encode this information in the vector $[\lambda_1,\ldots,\lambda_m]$, called {\bf highest weight}, and the associated irreducible representation is denoted by $\Gamma_{[\lambda_1,\ldots,\lambda_m]}$.

\begin{Proposition}\cite{fulton_harris91}
For $k=1,\ldots,m-1$, the exterior power $\Lambda^k(V_{st})$ of the standard representation $V_{st}$ of $\so(2m+1)$ is the irreducible representation with highest weight \begin{displaymath}
[\underbrace{1,\ldots,1}_{k\ \mathrm{ terms}},0,\ldots,0].                                                                                                                                                                    
\end{displaymath}

The spin representation $S$ is the irreducible representation with highest weight
\begin{displaymath}
[1/2,\ldots,1/2]. 
\end{displaymath}
\end{Proposition}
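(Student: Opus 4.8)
The plan is to realize both modules on weight-adapted models and read off their highest weights, obtaining irreducibility from a dimension count for the exterior powers and from the Clifford algebra for the spinor module. Complexify throughout and work with the root system of type $B_m$: fix a Cartan subalgebra $\mathfrak h\subset\so(2m+1)\otimes\C$ and coordinates for which the positive roots are $L_i\pm L_j$ $(i<j)$ and $L_i$ $(1\le i\le m)$, with dominant Weyl chamber $L_1\ge\cdots\ge L_m\ge 0$. Choose a basis $e_1,\dots,e_m,e_0,f_1,\dots,f_m$ of $\Vst\otimes\C$ in which the defining quadratic form pairs $e_i$ with $f_i$ and $e_0$ with itself, all other pairings being zero, and in which $\mathfrak h$ acts by $He_i=L_i(H)e_i$, $Hf_i=-L_i(H)f_i$, $He_0=0$; thus the weights of $\Vst$ are $\pm L_1,\dots,\pm L_m$ and $0$.

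For $1\le k\le m-1$ the weights of $\Lambda^k\Vst$ are the sums of $k$ pairwise distinct weights of $\Vst$, and a short inspection shows that the unique maximal such weight in the dominance order is $L_1+\cdots+L_k$, i.e.\ $[1,\dots,1,0,\dots,0]$. Since a maximal weight of a finite-dimensional module is always the highest weight of one of its irreducible constituents, $\Gamma_{[1,\dots,1,0,\dots,0]}$ occurs inside $\Lambda^k\Vst$. To see that it exhausts $\Lambda^k\Vst$ I would compute $\dim\Gamma_{[1,\dots,1,0,\dots,0]}$ from the Weyl dimension formula and check that it equals $\binom{2m+1}{k}=\dim\Lambda^k\Vst$; complete reducibility of $\so(2m+1)$-modules then forces equality. (Equivalently one may cite the classical fact that $\Lambda^k\C^n$ is $\so(n)$-irreducible for odd $n$; the case $k=2$ is simply $\Lambda^2\Vst\cong\so(2m+1)\otimes\C$, the adjoint representation, whose highest weight is the highest root $L_1+L_2$.)

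For the spin representation, realize $S$ via the complexified Clifford algebra: let $W:=\mathrm{span}(e_1,\dots,e_m)$, a maximal isotropic subspace, set $S:=\Lambda^*W$, and let $\Vst$ act by Clifford multiplication, with $e_i$ acting as exterior multiplication and $f_i$ as contraction. This integrates the embedding $\so(2m+1)\hookrightarrow\mathrm{Cl}^0(2m+1)\otimes\C$ of the Lie algebra as the degree-two part of the even Clifford algebra. A direct computation shows that $\mathfrak h$ acts diagonally on $\Lambda^*W$ with the $2^m$ distinct weights $\tfrac12(\pm L_1\pm\cdots\pm L_m)$, so the maximal weight is $\tfrac12(L_1+\cdots+L_m)=[1/2,\dots,1/2]$. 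Irreducibility follows since $\mathrm{Cl}^0(2m+1)\otimes\C\cong\mathrm{End}(S)$ acts irreducibly on $S$, while this associative algebra is generated by the identity together with $\so(2m+1)\otimes\C$; hence $\so(2m+1)$ already acts irreducibly, and by the remark above $S=\Gamma_{[1/2,\dots,1/2]}$.

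I do not anticipate a genuine obstacle: the statement is classical, and the only points requiring care are fixing the Borel subalgebra and the normalization of the form, checking that $L_1+\cdots+L_k$ (respectively $\tfrac12\sum_iL_i$) is the maximal weight of the model, and carrying out the Weyl-formula computation $\dim\Gamma_{[1,\dots,1,0,\dots,0]}=\binom{2m+1}{k}$, which is the one mildly technical step.
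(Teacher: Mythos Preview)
Your sketch is correct and follows the standard textbook argument (indeed essentially the one in Fulton--Harris): identify the weights of the model, pick out the dominant one, and establish irreducibility either by the Weyl dimension formula for $\Lambda^k\Vst$ or by the simplicity of the even Clifford algebra for $S$. There is nothing to criticize mathematically.

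The comparison with the paper is short: the paper does not prove this proposition at all. It is stated with a citation to \cite{fulton_harris91} and used as a black box; the authors only need to know which highest weights label the fundamental representations in order to set up the character computations that follow. So your proposal goes well beyond what the paper supplies, reproducing the classical proof that the paper is content to quote.
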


These $m$ representations are called fundamental representations. In fact, the representation ring $R$ is a polynomial ring on the isomorphism classes of the fundamental representations 
\begin{displaymath}
 R=\mathbb{Z[}[V_{st}],[\Lambda^2V_{st}],\ldots,[\Lambda^{m-1}V_{st}],[S]].
\end{displaymath}

It is useful to work with characters of representations, since the character carries the essential information about the representation. Let us denote by $\Z[\Lambda]$ the integral group ring on the abelian group $\Lambda$.

The character of the irreducible representation $\Gamma_{[\lambda_1,\ldots,\lambda_m]}=\Gamma_\lambda$ can be computed by Weyl's character formula (\cite{fulton_harris91})
\begin{equation} \label{eq_weyl}
\Char(\Gamma_\lambda)=\frac{\left|x_j^{\lambda_i+m-i+1/2}-x_j^{-(\lambda_i+m-i+1/2)}\right|}{\left|x_j^{m-i+1/2}-x_j^{-(m-i+1/2)}\right|}\in \Z[\Lambda], 
\end{equation}
where $x_j^{\pm1}$ and $x_j^{\pm 1/2}$ are the elements of $\Z[\Lambda]$ corresponding to the weights $\pm L_j$ and $\pm\frac{1}{2}L_j$ respectively.

We denote by $B_i$ the character of $\Lambda^iV_{st}$ and by $B$ the character of the spin representation $S$. Any $\so(2m+1)$-representation $V\in R$ can be expressed as a polynomial in the fundamental representations $V_{st},\Lambda^2V_{st},\ldots,\Lambda^{m-1}V_{st},S$, and its character can be computed as the same polynomial in $B_1,\ldots,B_{m-1},B$ (\cite{fulton_harris91}).

In practice, there is no easy way to find the decomposition of an arbitrary representation as an element of 
$R=\Z[[\Vst],[\Lambda^2\Vst],\ldots,[\Lambda^m\Vst],[S]]$. But we will see that the only representations 
of interest in our case are exterior powers of irreducible representations or sums of irreducible representations. To compute the 
character of a representation given in this form, we can use the following recurrence formula.

\begin{Theorem}[Adams formula \cite{fulton_harris91}]\label{theo:adams}
Define the {\bf Adams operator} $\psi^k:\Z[\Lambda]\longrightarrow \Z[\Lambda]$ by  $\psi^k(x_j)=x_j^k$. Then for any $\so(2m+1)-$representation $V$
\begin{displaymath}
\Char(\Lambda^d V) = \frac{1}{d}\sum_{k=1}^d (-1)^{k-1}\psi^k(\Char V) \Char(\Lambda^{d-k}V). 
\end{displaymath}
\end{Theorem}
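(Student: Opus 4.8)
\textbf{Proof proposal for Theorem \ref{theo:adams} (Adams formula).}

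The plan is to prove the identity at the level of the integral group ring $\Z[\Lambda]$, using the splitting principle together with the classical Newton--Girard identities relating power sums and elementary symmetric functions. First I would recall that, by the discussion preceding the statement, the character of an $\so(2m+1)$-representation $V$ lives in $\Z[\Lambda]$ and can be written, after extending scalars if necessary, as a sum of monomials $\Char V = \sum_{j} t_j$, where each $t_j$ is a weight contributing to $V$ (a ``virtual line'') — this is the splitting principle: formally $V$ behaves like a direct sum of one-dimensional pieces with characters $t_1,\dots,t_N$ where $N = \dim V$. Under this formal decomposition one has $\Char(\Lambda^d V) = e_d(t_1,\dots,t_N)$, the $d$-th elementary symmetric polynomial in the $t_j$, while $\psi^k(\Char V) = \sum_j t_j^k = p_k(t_1,\dots,t_N)$, the $k$-th power sum, since $\psi^k$ is the ring homomorphism sending $x_j \mapsto x_j^k$ and hence sends each weight monomial $t_j$ to $t_j^k$.

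With these translations in hand, the statement to be proved becomes the purely combinatorial identity
\begin{displaymath}
d \cdot e_d = \sum_{k=1}^d (-1)^{k-1} p_k \, e_{d-k},
\end{displaymath}
which is exactly the Newton--Girard recurrence for the elementary symmetric polynomials in terms of the power sums. The cleanest way to see this is to introduce the generating function $E(s) = \sum_{d \geq 0} e_d s^d = \prod_{j=1}^N (1 + t_j s)$ and take the logarithmic derivative: $E'(s)/E(s) = \sum_j t_j/(1 + t_j s) = \sum_{k \geq 1} (-1)^{k-1} p_k s^{k-1}$. Multiplying through by $E(s)$ gives $E'(s) = E(s) \cdot \sum_{k \geq 1} (-1)^{k-1} p_k s^{k-1}$, and comparing the coefficient of $s^{d-1}$ on both sides yields precisely $d \cdot e_d = \sum_{k=1}^d (-1)^{k-1} p_k e_{d-k}$. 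One then checks that this formal computation is valid: the $t_j$ need not be actual elements of $\Z[\Lambda]$, but the resulting identity among $e_d$ and $p_k$ is an identity of universal polynomials in the $t_j$, hence holds after any specialization, in particular the one furnished by the splitting principle applied to $\Char V$; since $\psi^k$ is a well-defined ring endomorphism of $\Z[\Lambda]$ and commutes with this specialization, the identity descends to an identity in $\Z[\Lambda]$ itself.

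The main point requiring care — and the only genuine obstacle — is the justification of the splitting principle in this setting: one must be sure that $\Char V$, a priori only a Laurent-type element of $\Z[\Lambda]$ with possibly negative coefficients for virtual representations, can legitimately be treated as a sum of ``virtual lines'' on which $\Lambda^d$ computes $e_d$ and $\psi^k$ computes $p_k$. For a genuine representation this is immediate from the weight-space decomposition of $V$, $\Lambda^d V$, and the definition of the Adams operation; for virtual representations one extends by $\Z$-linearity after verifying that both sides of the Adams formula are polynomial (hence ``universal'') in $\Char V$. Once this is granted, the rest is the standard Newton--Girard argument sketched above, and no representation-theoretic input beyond the weight decomposition is needed. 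I would present the proof in that order: (i) reduce to $\Z[\Lambda]$ via characters; (ii) invoke the splitting principle to rewrite $\Char \Lambda^d V$ as $e_d$ and $\psi^k \Char V$ as $p_k$; (iii) derive the Newton--Girard recurrence via the logarithmic-derivative generating-function computation; (iv) conclude.
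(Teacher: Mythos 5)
Your proposal is correct: reducing to the weight-space decomposition (so that $\Char V=\sum_j t_j$, $\Char\Lambda^dV=e_d$, $\psi^k(\Char V)=p_k$) and then invoking the Newton--Girard recurrence via the logarithmic derivative of $E(s)=\prod_j(1+t_js)$ is exactly the standard argument, and it is the one behind the cited reference; the paper itself states the Adams formula without proof, quoting \cite{fulton_harris91}. Note only that since the theorem concerns an actual representation $V$, your ``splitting principle'' worry is vacuous here: the $t_j$ are genuine group elements of $\Z[\Lambda]$ coming from the weight decomposition, so no extension to virtual representations is needed.
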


This formula allows us to compute inductively the character of $\Lambda^kV$. The next step is to write the obtained polynomial as a 
linear combination of characters of irreducible representations. Since a character uniquely determines the associated representation, we conclude that, if
\begin{displaymath}
 \Char(V)=\bigoplus n_\lambda \Char(\Gamma_\lambda),
\end{displaymath}
then
\begin{displaymath}
 V=\bigoplus n_\lambda \Gamma_\lambda.
\end{displaymath}

To decompose the character of a representation in characters of irreducible representations, we need two observations 
\begin{itemize}
 \item The leading monomial of the character of the irreducible representation $\Gamma_{[\lambda_1,\ldots,\lambda_m]}$ is
$\ x_1^{\lambda_1}\cdot \ldots \cdot x_m^{\lambda_m}.$ Recall that the leading monomial of a polynomial is the monomial of highest degree (with respect to the lexicographic order).
 \item If the leading monomial of the character of a representation $V$ is $n_\lambda x_1^{\lambda_1}\cdot \ldots \cdot x_m^{\lambda_m}$, 
then the leading monomial of the character of $V-n_\lambda \Gamma_{[\lambda_1,\ldots,\lambda_m]}$ is of strictly lower degree. 
\end{itemize}

Therefore we apply the following algorithm to decompose the character of a representation $V$ in irreducible characters :
\begin{enumerate}
 \item[a)] Find the leading monomial of $\Char(V)$ : $ n_\lambda x_1^{\lambda_1}\cdot \ldots \cdot x_m^{\lambda_m}$.
 \item[b)] Compute $\Char(\Gamma_{[\lambda_1,\ldots,\lambda_m]})$ with the help of Weyl's character formula.
 \item[c)] Compute $\Char(V-n_\lambda \Gamma_{[\lambda_1,\ldots,\lambda_m]})$.
 \item[d)] Find the leading monomial of the new polynomial. If it is not a constant, start over with b), otherwise we have the decomposition of $V$.
\end{enumerate}
After finitely many steps, we obtain the decomposition of $\Char(V)$.

\section{Dimension of $\Val^{\spin(9)}$}

In Section \ref{sec_spin9} we sketched an algorithm to determine the characters of exterior powers of some given representation of $\so(2m+1)$. In this section, we will apply this algorithm in order to compute the constants $b_k, b_{k,l}$. We denote by $V$ the $16$-dimensional space $\O^2$, with the spin representation of $\spin(9)$.

\begin{Proposition} 
The numbers $b_k=\dim (\Lambda^k(\O^2))^{\spin(9)}$ are given by 
\begin{displaymath}
 b_k =
\begin{cases}
 1 & for\ k=0,8,16,\\        
 0 &  for\ all\ other\ values\ of\ k.
\end{cases}
\end{displaymath}
\end{Proposition}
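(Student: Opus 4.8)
The plan is to compute the character of $\Lambda^*(\O^2)$ as a $\spin(9)$-representation and then extract the dimension of the invariant part $(\Lambda^k(\O^2))^{\spin(9)}$, which equals the multiplicity of the trivial representation $\Gamma_{[0,0,0,0]}$ in $\Lambda^k(\O^2)$. Since $\spin(9) = \spin(2\cdot 4 + 1)$, we have $m=4$, and the spin representation $\O^2 \cong S$ has highest weight $[1/2,1/2,1/2,1/2]$; its character $B$ is computed by Weyl's character formula \eqref{eq_weyl}, giving $B = \prod_{j=1}^4 (x_j^{1/2} + x_j^{-1/2})$. First I would feed this character $B$ into the Adams formula (Theorem \ref{theo:adams}) to produce $\Char(\Lambda^d S)$ for $d = 0, 1, \ldots, 16$ recursively.

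Next, for each $d$ I would apply the decomposition algorithm described at the end of Section \ref{sec_spin9}: find the leading monomial, subtract the corresponding irreducible character computed via \eqref{eq_weyl}, and repeat until reaching a constant. The coefficient of the trivial character $\Char(\Gamma_{[0,0,0,0]}) = 1$ in the final decomposition is exactly $b_d$. By Poincar\'e duality $\Lambda^d S \cong \Lambda^{16-d} S$ (the top exterior power $\Lambda^{16} S$ is trivial since $\spin(9) \subset \SO(16)$), so it suffices to carry out the computation for $d \leq 8$ and invoke the symmetry $b_d = b_{16-d}$. The cases $d=0$ and $d=16$ give $b_0 = b_{16} = 1$ trivially. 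The claim is then that the only remaining nonzero value is $b_8 = 1$, i.e. $\Lambda^d S$ contains no trivial summand for $1 \le d \le 7$, and exactly one for $d = 8$.

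A cleaner way to organize the middle-degree computation, which I would use as a cross-check, is to exploit the factorization of $B$. Writing $u_j = x_j^{1/2} + x_j^{-1/2}$, the generating function for the exterior powers is $\sum_d \Char(\Lambda^d S)\, t^d = \prod_{\text{weights } w} (1 + t\, x^w)$, and since the $2^4 = 16$ weights of $S$ are $\pm\frac12 L_1 \pm \frac12 L_2 \pm \frac12 L_3 \pm \frac12 L_4$ with all sign combinations, this product factors through the $8$ pairs of opposite weights. Extracting the constant term (the multiplicity of the trivial representation) amounts to integrating over the maximal torus, but more practically one just tracks which monomials $x_1^{a_1}\cdots x_4^{a_4}$ with all $a_i = 0$ appear; by the structure of the decomposition algorithm the coefficient of the constant term $1$ in the fully reduced expression is $b_d$. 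I expect $\Lambda^8 S$ to contain, among larger irreducibles, exactly one copy of $\Gamma_{[0,0,0,0]}$ — plausibly reflecting the self-duality pairing $\Lambda^8 S \cong \Lambda^8 S^*$ and the fact that $S$ carries a $\spin(9)$-invariant symmetric (or appropriately signed) bilinear form in suitable degree.

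The main obstacle is purely computational bookkeeping: running the Adams recursion up to $d = 8$ produces characters that are sums of many monomials in $x_1^{\pm 1/2}, \ldots, x_4^{\pm 1/2}$, and the decomposition algorithm must be iterated several times per degree, each time invoking Weyl's formula for a new highest weight. There is no conceptual difficulty — the representation ring of $\spin(9)$ is well understood and the algorithm terminates — but care is needed to avoid arithmetic slips, so I would verify the output against the constraint $\sum_d (-1)^d b_d = 0$ (Euler characteristic, since $S$ has even dimension and no zero weight would make this automatic — in fact here one checks $\dim(\Lambda^* S)^{\spin(9)} = 1 + 1 + 1 = 3$ split as $2$ in even total degree minus... ) and against the Poincar\'e duality symmetry. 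Once the table $b_0 = b_8 = b_{16} = 1$ and $b_k = 0$ otherwise is confirmed by the algorithm, the proposition follows.
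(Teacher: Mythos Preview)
Your proposal is correct and follows essentially the same approach as the paper: compute $\Char(\Lambda^k S)$ recursively via the Adams formula, decompose into irreducibles using the leading-monomial algorithm and Weyl's character formula, read off the multiplicity of $\Gamma_{[0,0,0,0]}$, and invoke $\Lambda^k V \cong \Lambda^{16-k} V$ to halve the work. The paper simply carries this out explicitly, listing the full irreducible decomposition of $\Lambda^k V$ for $2 \le k \le 8$; your additional remarks about the generating-function factorization and Euler-characteristic sanity check are reasonable cross-checks but not part of the paper's argument (and your parenthetical about the alternating sum trails off and should be cleaned up or dropped).
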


\begin{proof}  We compute the exterior powers of the spin representation of $\so(9)$ as follows. 

\emph{Case $k=0$ :} $\Lambda^0(V)=\C$ is the trivial representation, so $b_0=1$.

\emph{Case $k=1$ :} $\Lambda^1(V)=V=\Gamma_{[1/2,1/2,1/2,1/2]}$ is irreducible, so $b_1=0$. 

\emph{Case $2\leq k\leq 8$ :} With Adams formula and the algorithm described above, we can decompose $\Lambda^kV$ in irreducible representations as follows :
\begin{align*}
\Lambda^2V & =  \Gamma_{[1,1,1,0]}\oplus\Gamma_{[1,1,0,0]}\\
\Lambda^3V & = \Gamma_{[3/2,3/2,1/2,1/2]}\oplus\Gamma_{[3/2,1/2,1/2,1/2]}\\
\Lambda^4V & = \Gamma_{[2,2,0,0]}\oplus\Gamma_{[2,1,1,1]}\oplus\Gamma_{[2,1,0,0]} \oplus\Gamma_{[2,0,0,0]}\oplus\Gamma_{[1,1,1,1]}\\
\Lambda^5V & = \Gamma_{[5/2,3/2,1/2,1/2]}\oplus\Gamma_{[5/2,1/2,1/2,1/2]}\oplus \Gamma_{[3/2,3/2,3/2,3/2]}\oplus \Gamma_{[3/2,3/2,1/2,1/2]}\oplus \Gamma_{[3/2,1/2,1/2,1/2]}\\
\Lambda^6V & = \Gamma_{[3,1,1,0]}\oplus\Gamma_{[3,1,0,0]} \oplus\Gamma_{[2,2,1,1]} \oplus\Gamma_{[2,1,1,1]}\oplus\Gamma_{[2,1,1,0]} \oplus\Gamma_{[2,1,0,0]}\oplus\Gamma_{[1,1,1,0]} \oplus\Gamma_{[1,1,0,0]}\\
\Lambda^7V & = \Gamma_{[7/2,1/2,1/2,1/2]}\oplus\Gamma_{[5/2,3/2,3/2,1/2]}\oplus \Gamma_{[5/2,3/2,1/2,1/2]} \oplus\Gamma_{[5/2,1/2,1/2,1/2]}\\
 & \oplus \Gamma_{[3/2,3/2,3/2,1/2]}
 \oplus \Gamma_{[3/2,3/2,1/2,1/2]} \oplus \Gamma_{[3/2,1/2,1/2,1/2]}\oplus \Gamma_{[1/2,1/2,1/2,1/2]}\\
\Lambda^8V & = \Gamma_{[4,0,0,0]} \oplus\Gamma_{[3,1,1,1]}\oplus\Gamma_{[3,1,1,0]} \oplus\Gamma_{[3,0,0,0]} \oplus\Gamma_{[2,2,2,0]} \oplus\Gamma_{[2,2,1,0]} \oplus\Gamma_{[2,2,0,0]}\oplus\Gamma_{[2,1,1,1]}\\
& \oplus\Gamma_{[2,1,1,0]}\oplus\Gamma_{[2,0,0,0]} \oplus\Gamma_{[1,1,1,1]}\oplus\Gamma_{[1,1,1,0]} \oplus\Gamma_{[1,0,0,0]}\oplus\underbrace{\Gamma_{[0,0,0,0].}}_{\mathrm{trivial\ representation}}
\end{align*}
Hence $b_k=1$ if and only if $k=8$. 

\emph{Case $9\leq k\leq 16$ :} Since there is an isomorphism of $\spin(9)$-modules
\begin{displaymath}
 \Lambda^kV\cong \Lambda^{16-k}V,
\end{displaymath}
the only non-zero $b_k$ is $b_{16}$.
\end{proof}

\begin{Proposition} \label{prop_table_bkl}
The numbers $b_{k,l}=\dim \Omega^{k,l}_h(S\O^2)^{\spin(9)}$ are given by the following table, together with the symmetry relations $b_{k,l}=b_{15-k,15-l}=b_{k,15-l}=b_{15-k,l}$:

\begin{displaymath}
\begin{array}{|c|c||c|c|c|c|c|c|c|c|c|}
\hline  & k & 0 & 1  & 2  & 3  & 4   & 5   & 6   & 7   \\
\hline l &   &   &    &    &    &     &     &     &     \\
\hline \hline 0 &   & 1 & 0  & 0  & 1  & 2   & 1   & 0   & 4   \\
\hline 1 &   & 0 & 2  & 2  & 3  & 5   & 7   & 10  & 9   \\
\hline 2 &   & 0 & 2  & 7  & 7  & 10  & 22  & 28  & 24  \\
\hline 3 &   & 1 & 3  & 7  & 18 & 30  & 39  & 50  & 63  \\
\hline 4 &   & 2 & 5  & 10 & 30 & 56  & 68  & 88  & 116 \\
\hline 5 &   & 1 & 7  & 22 & 39 & 68  & 116 & 150 & 162 \\
\hline 6 &   & 0 & 10 & 28 & 50 & 88  & 150 & 204 & 210 \\
\hline 7 &   & 4 & 9  & 24 & 63 & 116 & 162 & 210 & 266 \\
\hline
\end{array}
\end{displaymath}
\end{Proposition}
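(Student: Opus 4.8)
The plan is to transfer the problem to the representation theory of the stabilizer $\spin(7)$ and then to apply the character algorithm of Section~\ref{sec_spin9}, exactly as was done in \cite{bernig_qig} for quaternionic space. Fix a base point $v_0\in S(\O^2)=S^{15}$ and let $H=\stab(v_0)\cong\spin(7)$ be its stabilizer in $\spin(9)$, so that $S(\O^2)\cong\spin(9)/H$. The sphere bundle $S\O^2=\O^2\times S(\O^2)$ is $\spin(9)$-equivariant, and translation invariance identifies $\Omega^{k,l}_h(S\O^2)^{tr}$ with the space of smooth sections over $S(\O^2)$ of the homogeneous bundle whose fiber over $v$ is $\Lambda^k(v^\perp)^*\otimes\Lambda^l(v^\perp)^*$ (one tensor factor arising from the $\O^2$-directions of the contact distribution, the other from $T_vS(\O^2)$; both are copies of $v^\perp$). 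Passing to $\spin(9)$-invariant sections and using Frobenius reciprocity then gives
\[
  b_{k,l}=\dim\bigl(\Lambda^k(v_0^\perp)^*\otimes\Lambda^l(v_0^\perp)^*\bigr)^{H}.
\]
By the Corollary in Section~\ref{sec_spin9}, $v_0^\perp=\O'\oplus\O$ as an $H$-module, where $\O'=\R^7$ carries the standard representation $\Vst$ and $\O=\R^8$ the spin representation $S$ of $\spin(7)$; as these are self-dual, it remains to compute $\dim(\Lambda^kW\otimes\Lambda^lW)^{\spin(7)}$ for $W:=\Vst\oplus S$ and all $0\le k,l\le 7$.

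Since $-1$ lies in the Weyl group of $\so(7)$, every irreducible $\spin(7)$-representation is self-dual, so $\dim(\Gamma_\lambda\otimes\Gamma_\mu)^{\spin(7)}=\delta_{\lambda\mu}$. Hence, writing $\Lambda^kW=\bigoplus_\lambda n^{(k)}_\lambda\,\Gamma_\lambda$, we obtain
\[
  b_{k,l}=\sum_\lambda n^{(k)}_\lambda\,n^{(l)}_\lambda ,
\]
and everything reduces to determining the multiplicities $n^{(k)}_\lambda$ for $k=0,\ldots,7$. This is carried out exactly as in Section~\ref{sec_spin9}: from $\Char W=B_1+B$, the Adams formula (Theorem~\ref{theo:adams}) produces $\Char(\Lambda^kW)$ --- most conveniently via $\Lambda^kW=\bigoplus_{i+j=k}\Lambda^i\Vst\otimes\Lambda^jS$, after first computing $\Char(\Lambda^i\Vst)$ for $i\le 3$ and $\Char(\Lambda^jS)$ for $j\le 4$ and using $\Lambda^i\Vst\cong\Lambda^{7-i}\Vst$, $\Lambda^jS\cong\Lambda^{8-j}S$ --- and the leading-monomial algorithm of Section~\ref{sec_spin9} decomposes each $\Char(\Lambda^kW)$ into irreducible characters, giving the $n^{(k)}_\lambda$.

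Finally, the Hodge star furnishes $\spin(7)$-isomorphisms $\Lambda^kW\cong\Lambda^{15-k}W$ (here $\dim W=15$ and $\spin(7)\subset\SO(7)\times\SO(8)$ acts preserving orientation), hence $n^{(k)}_\lambda=n^{(15-k)}_\lambda$; together with the evident symmetry of the formula above in $k$ and $l$, this gives the stated relations $b_{k,l}=b_{15-k,l}=b_{k,15-l}=b_{15-k,15-l}$, so the displayed $8\times8$ block determines the entire $16\times16$ table. The main obstacle is the bulk of the second paragraph: decomposing all exterior powers $\Lambda^kW$ with $k\le 7$ of a $15$-dimensional representation of a rank-$3$ group is a long (if purely mechanical) computation, and in particular the exterior powers $\Lambda^jS$ of the $8$-dimensional spin representation of $\spin(7)$ have rather intricate decompositions --- for instance $\Lambda^4S$ contains a trivial summand, spanned by the Cayley calibration. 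A secondary point needing care is the identification above of the fiber of the horizontal bundle, in particular the verification that both of its tensor factors carry the isotropy representation $v_0^\perp$ of the Corollary; once this is in place the remainder is bookkeeping.
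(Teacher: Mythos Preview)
Your proposal is correct and follows essentially the same approach as the paper: reduce $\Omega^{k,l}_h(S\O^2)^{\spin(9)}$ via transitivity to $(\Lambda^kW\otimes\Lambda^lW)^{\spin(7)}$ with $W=\O'\oplus\O$, apply Schur's lemma to obtain $b_{k,l}=\sum_\lambda n^{(k)}_\lambda n^{(l)}_\lambda$, and then decompose the exterior powers by the character algorithm. The only differences are cosmetic: the paper applies Adams' formula directly to $\Char W=B_1+B$ rather than first splitting $\Lambda^kW=\bigoplus_{i+j=k}\Lambda^i\Vst\otimes\Lambda^jS$, and it states the symmetry relations without the Hodge-star justification you supply.
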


\proof
We first describe the spaces $\Omega^{k,l}_h(SV)^{\spin(9)}$. 

Let $v=(1,0)^T\in\ \S^{15} \subset \O^2$. Since $\overline{\spin(9)}$ acts transitively on $SV$, there is an isomorphism

\begin{align*}
\Psi\ :\ \Omega^{k,l}(SV)^{\spin(9)} & \to \Lambda^{k,l}(T_{(0,v)}SV)^{\stab((0,v))}=\Lambda^{k,l}(T_{(0,v)}SV)^{\spin(7)}\\
\omega & \mapsto  \omega(0,v).
\end{align*}

The tangent space $T_{(0,v)}(SV)$ decomposes as 
\begin{displaymath}
T_{(0,v)}(SV) =\R v\oplus T\oplus T,
\end{displaymath}
where $T:=T_v\S^{15}=\O'\oplus\O$. 

The isomorphism thus becomes 
\begin{displaymath}
\Psi\ :\ \Omega^{k,l}(SV)^{\spin(9)} \to \Lambda^{k,l}(\R\oplus\O'\oplus\O\oplus\O'\oplus\O)^{\spin(7)}.
\end{displaymath}

Recall that 
\begin{displaymath}
\Omega^{k,l}_h(SV)^G := \left.\Omega^{k,l}(SV)^G \middle/\Omega^{k,l}_v(SV)^G\right., 
\end{displaymath}
where
\begin{displaymath}
 \Omega^{k,l}_v(SV)^G := \{\omega\in\Omega^{k,l}(SV)^G\, |\, \alpha\wedge \omega=0\}.
\end{displaymath}

Since 
\begin{displaymath}
\Psi(\alpha) (\lambda v+w) =  \alpha_{(0,v)}(\lambda v +w) = \langle v, \lambda v\rangle= \lambda,
\end{displaymath}
for all $w \in T \oplus T$, $\Psi$ induces an isomorphism on the space of horizontal forms by
\begin{eqnarray*}
\Omega^{k,l}_h(SV)^{\spin(9)}&\longrightarrow& \Lambda^{k,l}(\O'\oplus\O\oplus\O'\oplus\O)^{\spin(7)}\\
\omega &\mapsto & \omega(0,v).
\end{eqnarray*}

So the coefficients $b_{k,l}$ can be computed as
\begin{align*}
 b_{k,l} & =\dim\Lambda^{k,l}(\O'\oplus\O\oplus\O'\oplus\O)^{\spin(7)}\\
& = \dim (\Lambda^k(\O'\oplus\O)\otimes\Lambda^l(\O'\oplus\O) )^{\spin(7)}\\
& = \dim \Hom_{\spin(7)}(\Lambda^k(\O' \oplus \O),\Lambda^l(\O' \oplus \O)),
\end{align*}
where the last equation follows from the self-duality of $\Lambda^k(\O' \oplus \O)$.

Therefore, if  
\begin{displaymath}
 \Lambda^i(\O'\oplus \O)=\sum_\lambda n^{(i)}_\lambda \Gamma_\lambda, \quad i=0,\ldots,7
\end{displaymath}
is the decomposition into irreducible parts, Schur's lemma implies that  
\begin{equation}\label{eq2}
b_{k,l}=\sum_{\lambda}n^{(k)}_\lambda n^{(l)}_\lambda.
\end{equation} 

We first compute with Weyl's character formula \eqref{eq_weyl} the character
\begin{displaymath}
 \Char(\Gamma_{[1,0,0]})+\Char(\Gamma_{[1/2,1/2,1/2]})
\end{displaymath}
of $\O\oplus\O'$, then apply Adam's formula and the same algorithm as before. The result is the following table, whose $i$-th column contains in the line indexed by $[\lambda_1,\lambda_2,\lambda_3]$ the coefficient of $\Gamma_{[\lambda_1,\lambda_2,\lambda_3]}$ in the decomposition of $\Lambda^i(\O\oplus \O')$. 

\begin{displaymath}
 \begin{array}{| c | c | c | c | c | c | c | c | c |} \hline
 &  i=0 & i=1 & i=2 & i=3 & i=4 & i=5 & i=6 & i=7  \\ \hline
 [0,0,0] & 1 &   &   & 1 & 2 & 1 &   & 4 \\ \hline
 [\frac{1}{2},\frac{1}{2},\frac{1}{2}]  &   & 1 & 1 & 2 & 3 & 4 & 5 & 6 \\ \hline
[1,0,0] &   & 1 & 1 & 1 & 2 & 3 & 5 & 3 \\ \hline
[1,1,0] &   &   & 2 & 1 & 1 & 5 & 6 & 4 \\ \hline
 [1,1,1] &   &   &   & 2 & 4 & 3 & 4 & 7 \\ \hline
 [\frac{3}{2},\frac{1}{2},\frac{1}{2}] &   &   & 1 & 2 & 3 & 5 & 6 & 7 \\ \hline
 [\frac{3}{2},\frac{3}{2},\frac{1}{2}] &   &   &   &   1 & 2 & 3 & 5 & 5 \\ \hline
 [\frac{3}{2},\frac{3}{2},\frac{3}{2}]   &   &   &   &   & 1 & 2 & 2 & 3 \\ \hline
 [2,0,0]     &   &   &   & 1 & 2 & 1 & 1 & 3 \\ \hline
 [2,1,0]      &   &   &   & 1 & 1 & 2 & 3 & 4 \\ \hline
 [2,1,1]       &   &   &   &   & 1 & 3 & 4 & 4 \\ \hline
 [2,2,0] &   &   &   &   & 1 &   &   & 2 \\ \hline
 [2,2,1] &   &   &   &   &   & 1 & 2 & 1 \\ \hline
 [2,2,2] &   &   &   &   &   &   &   & 1 \\ \hline
 [\frac{5}{2},\frac{1}{2},\frac{1}{2}] &   &   &   &   & 1 & 1 & 2 & 2 \\ \hline
 [\frac{5}{2},\frac{3}{2},\frac{1}{2}] &   &   &   &   &   &  1 & 1 & 2 \\ \hline
 [\frac{5}{2},\frac{3}{2},\frac{3}{2}]  &   &   &   &   &   &   & 1 & 1 \\ \hline
 [3,0,0]        &   &   &   &   &   & 1 &   &   \\ \hline
 [3,1,0]        &   &   &   &   &   &   & 1 &   \\ \hline
 [3,1,1]        &   &   &   &   &   &   &   & 1 \\ \hline
 \end{array}
\end{displaymath}
 
The values for $b_{k,l}$ stated in the proposition follow from \eqref{eq2} and this table.
\endproof

\proof[Proof of Theorem \ref{mainthm_dimensions}]
Theorem \ref{mainthm_dimensions} follows from Proposition \ref{prop_table_bkl} and \eqref{eq1}. 
\endproof

\section{A canonical valuation on Riemannian manifolds}
\label{sec_valman}

We first recall the background material for valuations on manifolds. We refer to the articles \cite{alesker_val_man1}, \cite{alesker_val_man2} and the lectures notes \cite{alesker_barcelona} as a general reference for the material presented in the following.

Let $M$ be a smooth oriented $n$-dimensional manifold.
\begin{Definition}
A closed subset $P \subset M$ is a {\bf submanifold with corners} if it is locally diffeomorphic to 
$\R_{\geq 0}^i\times \R^{j}$, with integers $i,j$.
\end{Definition}

Denote by $\mathcal P (M)$ the space of compact submanifolds with corners in $M$.

Consider the oriented projectivization $\mathbb P_M$ of the cotangent bundle $T^*M$ 
\begin{displaymath}
 \mathbb P_M:= (T^*M\backslash \underbar0)/\R_{>0}\cong S^*M,
\end{displaymath}
here $\underbar0$ is the zero section of $T^*M$ and $S^*M$ denotes the cosphere bundle of $M$.

An element of $\mathbb P_M$ can be thought of as a pair $(p,H)$ with $p\in M$ and $H\subset T_pM$ an oriented hyperplane.

\begin{Definition} For $P \in\mathcal P(M)$, we define the following sets:
The {\bf tangent cone} to $P$ at $x$ is given by
\begin{displaymath}
 T_xP:=\{v \in T_xM\ |\ \exists\ \text{smooth curve}\ c:\R \to P\ \text{with}\ c(0)=x, c'(0)=v\}.
\end{displaymath}
The {\bf dual tangent cone} is
\begin{displaymath}
 T_x^\circ P:=\{\xi \in T_x^*M\ |\ \langle \xi,v \rangle \leq 0 \ \forall v \in T_xP\}.
\end{displaymath}
The {\bf normal cycle} of $P$ is given by 
\begin{displaymath}
N(P):=\bigcup_{x\in P}(T_x^\circ P\setminus \{0\})/\R_{>0}. 
\end{displaymath}
\end{Definition}

It is well known that $N(P)$ is an $(n-1)$-dimensional Lipschitz submanifold of $\mathbb P_M$, which can be oriented in a canonical way. 

If $M=V$ is a vector space, this definition coincides with Definition \ref{def:nc}.

\begin{Definition}
A {\bf valuation on a manifold} $M$ is a finitely additive functional $\mu:\mathcal P(M) \to \R$, i.e. for any $P_1,P_2 \in \mathcal P(M)$ such that $P_1 \cup P_2,\ P_1 \cap P_2 \in \mathcal{P}(M)$,
\begin{displaymath}
 \mu(P_1 \cup P_2)=\mu(P_1)+\mu(P_2)-\mu(P_1 \cap P_2).
\end{displaymath}
A valuation is said to be {\bf smooth} if there exist $\gamma \in \Omega^{n-1}(S^*M)$ and $\varphi \in \Omega^n(M)$ such that
\begin{displaymath}
\mu(P)=\int_{N(P)}\gamma+\int_P\varphi. 
\end{displaymath}
\end{Definition}

We denote by $\mathcal V^\infty(M)$ the space of smooth valuations on $M$. Note that any pair $(\varphi,\gamma) \in \Omega^{n}(M) \times \Omega^{n-1}(SM)$ defines a valuation. However, different pairs may define the same valuation, compare \cite{bernig_broecker07} for the description of the kernel of this map. 

If $\mu \in \mathcal V^\infty(M)$ is represented by a pair $(\varphi,\gamma)$, then the pair $((-1)^n \phi,(-1)^n s^*\gamma)$ defines a valuation $\sigma \mu \in \mathcal V^\infty(M)$. Here $s:S^*M \to S^*M$ is the involution $(p,[\xi]) \mapsto (p,[-\xi]), p \in M, \xi \in T_p^*M \setminus \{0\}$. The operation $\sigma$ is well-defined and is called {\bf Euler-Verdier involution} \cite{alesker_val_man2}.  

From now on, we let $(M,g)$ be a Riemannian manifold. There is a canonical identification $S^*M \cong SM$, so that we may consider $N(P)$ as a submanifold of $SM$.

It will be more convenient to work with a subset in $TM$ instead of $SM$. 

\begin{Definition}
For a compact submanifold with corners $P \subset M$,  its {\bf disc bundle} $N_1(P) \subset TM$ is the sum of 
$P \times \{0\}$ and the image of $[0,1] \times N(P)$ under the homothety in the second factor 
\begin{displaymath}
 N_1(P)=\iota_*(P)+F_*([0,1]\times N(P)),
\end{displaymath}
where $\iota : M\hookrightarrow TM,\ p\mapsto (p,0)$ is the natural inclusion and $F:\R\times SM\to TM,\ (t,(p,v))\mapsto (p,tv)$ is the 
homothety map.
\end{Definition}

Clearly $N_1(P)$ is an $n$-dimensional Lipschitz submanifold of $TM$ with boundary, and we have
\begin{displaymath}
 \partial N_1(P)=N(P).
\end{displaymath}

If a smooth form $\gamma \in \Omega^{n-1}(SM)$ extends to an $(n-1)$-form $\tilde \gamma \in \Omega^{n-1}(TM)$, then Stoke's theorem implies
\begin{displaymath}
 \int_{N(P)}\gamma=\int_{N_1(P)}d\tilde \gamma.
\end{displaymath}

Conversely, we have the following. 

\begin{Lemma}\label{lemme6.1.5}
Any smooth $n$-form $\omega$ on $TM$ defines a smooth valuation by
\begin{displaymath}
 \mu(P)=\int_{N_1(P)} \omega,
\end{displaymath}
for $P \in \P(M)$.
\end{Lemma}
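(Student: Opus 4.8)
The plan is to reduce the claim to the two defining structural properties of a valuation on a manifold in the sense of the preceding definition: finite additivity and smoothness (existence of a representing pair $(\varphi,\gamma)$). Write $\omega \in \Omega^n(TM)$ for the given form and set $\mu(P) := \int_{N_1(P)}\omega$ for $P \in \P(M)$. The first task is to make sense of this integral: by the remarks above, $N_1(P)$ is an $n$-dimensional Lipschitz submanifold of $TM$ with a canonical orientation (inherited from the canonical orientation of the normal cycle $N(P)$ together with the orientation of the fibre direction $[0,1]$), so $\int_{N_1(P)}\omega$ is well defined and finite, $\omega$ being smooth and $N_1(P)$ compact.

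Next I would verify finite additivity. Suppose $P_1,P_2,P_1\cup P_2, P_1\cap P_2 \in \P(M)$. The key geometric input is the corresponding additivity relation for disc bundles, namely the current identity
\begin{displaymath}
  N_1(P_1\cup P_2) + N_1(P_1\cap P_2) = N_1(P_1) + N_1(P_2)
\end{displaymath}
as Lipschitz chains (integral currents) in $TM$. This is the analogue for disc bundles of the well-known additivity of normal cycles $N(P_1\cup P_2)+N(P_1\cap P_2)=N(P_1)+N(P_2)$; one obtains it either fibrewise over each point $x\in M$ from the corresponding identity of dual tangent cones, or by observing that $N_1(P)$ is, up to the zero section and a homothety, the cone over $N(P)$, and that coning is additive. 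Granting this, integrating $\omega$ over both sides and using linearity of integration gives $\mu(P_1\cup P_2)+\mu(P_1\cap P_2)=\mu(P_1)+\mu(P_2)$, which is the required relation.

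Finally, smoothness. I would exhibit a representing pair. Decompose $\omega$ according to the splitting $N_1(P)=\iota_*(P) \sqcup F_*([0,1]\times N(P))$ coming from the definition of the disc bundle. Over the zero section $\iota(M)\cong M$ the form $\omega$ restricts to an $n$-form on $M$; call its pullback $\varphi := \iota^*\omega \in \Omega^n(M)$, contributing $\int_P \varphi$. For the cone part, $F_*([0,1]\times N(P))$ is the image of $[0,1]\times N(P)$ under the homothety map $F$; by Fubini for the pushforward (integrating out the $[0,1]$-variable), $\int_{F_*([0,1]\times N(P))}\omega = \int_{N(P)}\gamma$ where $\gamma := \int_0^1 F^*\omega$ is the fibre-integral of $F^*\omega$ over $[0,1]$, a smooth $(n-1)$-form on $SM$. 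Hence $\mu(P)=\int_{N(P)}\gamma+\int_P\varphi$ with $(\gamma,\varphi)$ smooth, so $\mu$ is a smooth valuation.

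The main obstacle is the additivity step: one must justify the disc-bundle identity $N_1(P_1\cup P_2)+N_1(P_1\cap P_2)=N_1(P_1)+N_1(P_2)$ carefully as an identity of Lipschitz chains, handling the overlaps on the zero section and the behaviour of the homothety $F$ near $t=0$, rather than as a mere set-theoretic statement; the cleanest route is to deduce it from the known additivity of $N(\cdot)$ together with the explicit description $N_1(P)=\iota_*(P)+F_*([0,1]\times N(P))$ and bilinearity of the pushforward, so that no genuinely new geometric measure theory is needed beyond what is already invoked for normal cycles.
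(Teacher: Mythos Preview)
Your proof is correct and follows essentially the same route as the paper: you decompose $N_1(P)=\iota_*(P)+F_*([0,1]\times N(P))$ and produce the representing pair $\varphi=\iota^*\omega$, $\gamma=\int_0^1 F^*\omega$ by fibre integration, exactly as in the paper's argument. The only difference is that you add a separate verification of finite additivity via the disc-bundle identity, whereas the paper simply writes $\mu(P)=\int_P\varphi+\int_{N(P)}\gamma$ and stops; once this representation is established, additivity is automatic from the additivity of $P\mapsto N(P)$ and of $P\mapsto\int_P\varphi$, so your extra step is not wrong but is redundant.
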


\proof
Let us write  
\begin{align*}
\mu(P) & = \int_{N_1(P)} \omega\\
& =  \int_P \iota^*\omega + \int_{[0,1]\times N(P)} F^*\omega\\
& =  \int_P \underbrace{\iota^*\omega}_{=:\varphi} + \int_{N(P)}\underbrace{\int_0^1 F^*\big|_{(t,\cdot)}\left(\frac{\partial}{\partial t},\cdot\right) dt}_{=:\gamma}\\
& =  \int_P \varphi + \int_{N(P)} \gamma,
\end{align*}
with $\varphi \in \Omega^n(M)$ and $\gamma \in \Omega^{n-1}(SM)$.
\endproof

Let us denote by $\Val^\infty(TM)$ the bundle whose fiber over a point $p$ is the space $\Val^\infty(T_pM)$. Then \eqref{eq_mcmullen} implies a grading
\begin{displaymath}
\Val^\infty(TM)=\bigoplus_{k=0}^n \Val_k^\infty(TM). 
\end{displaymath}

\begin{Theorem}[\cite{alesker_val_man2, alesker_barcelona}]\label{theo:filtr}
There exists a canonical filtration of $\V^\infty(M)$ by closed subspaces
$$\V^\infty(M)=\W_0\supset \W_1\supset \cdots \supset \W_n,$$
such that the associated graded space 
$gr_W\V^\infty(M):=\bigoplus_{k=0}^n \W_k/\W_{k+1}$ is canonically isomorphic to the space $C^\infty(M, \Val^\infty (TM))$ of smooth 
sections of the infinite-dimensional vector bundle $\Val^{\infty}(TM) \longrightarrow M$.
\end{Theorem}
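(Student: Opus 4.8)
The plan is to construct the filtration directly from the presentation of smooth valuations by differential forms, and then identify the graded pieces fiberwise using the linearization map $P \mapsto N_1(P)$ combined with the translation-invariant theory of Section~2. By Lemma~\ref{lemme6.1.5}, every smooth $n$-form $\omega$ on $TM$ yields a smooth valuation, and conversely every smooth valuation arises this way (choose $\tilde\gamma$ extending $\gamma$ and add the pullback of $\varphi$ under $TM\to M$). So I would first define $\W_k$ to be the subspace of valuations represented by forms $\omega$ on $TM$ whose ``vertical degree'' — the number of differentials in the fiber directions of $TM\to M$ — is at least $k$ at every point; one checks this is well defined (independent of the representing form) using the Bernig--Bröcker description of the kernel of $(\varphi,\gamma)\mapsto\mu$ cited in the excerpt. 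This gives a decreasing chain $\V^\infty(M)=\W_0\supset\cdots\supset\W_n$, and $\W_{n+1}=0$ since a form on an $n$-dimensional fiber cannot have vertical degree $n+1$.

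Next I would set up the symbol map $\W_k\to C^\infty(M,\Val_k^\infty(TM))$. Given $\mu\in\W_k$ represented by $\omega$ with vertical degree $\geq k$, restrict attention at a point $p$ to the part of $\omega$ of vertical degree exactly $k$; its value along the fiber $T_pM$ is (after contracting with the Euler vector field, exactly as in the proof of Lemma~\ref{lemme6.1.5}) an $(n-1)$-form on the sphere bundle of the vector space $T_pM$ together with an $n$-form on $T_pM$, hence defines an element of $\Val^\infty(T_pM)$; homogeneity considerations — rescaling in the fiber multiplies a vertical-degree-$k$ form by $t^k$ in the relevant way — force this element to lie in $\Val_k^\infty(T_pM)$. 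Smoothness in $p$ is inherited from smoothness of $\omega$. This defines the map $T^k:\W_k\to C^\infty(M,\Val_k^\infty(TM))$; one then shows its kernel is exactly $\W_{k+1}$ (a valuation whose order-$k$ symbol vanishes at every point is represented, after modifying by exact and ideal terms, by a form of vertical degree $\geq k+1$), giving an injection $\W_k/\W_{k+1}\hookrightarrow C^\infty(M,\Val_k^\infty(TM))$.

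Surjectivity is the step that requires genuine work: given an arbitrary smooth section $s$ of $\Val_k^\infty(TM)$, produce a global smooth $n$-form on $TM$ of vertical degree $k$ whose symbol is $s$. Locally this is immediate — trivialize $TM$ over a chart and use the isomorphism with the translation-invariant picture of Section~2, writing $s$ via forms on $SV$ — and the issue is patching. I would handle it with a partition of unity on $M$: the local forms can be chosen smoothly, and because the symbol map only remembers the leading vertical-degree part, the discrepancies between overlapping local choices have vertical degree $> k$ and therefore do not affect the symbol; summing the partition-of-unity pieces gives a global form inducing $s$. The main obstacle is making this gluing argument clean, i.e.\ verifying that ``lower-order'' corrections really are invisible to $T^k$ and that the constructed form is smooth across chart boundaries; this is where one must be careful with the precise meaning of ``vertical degree'' for a form on the total space $TM$ (as opposed to a connection-dependent bigrading) and with the Bernig--Bröcker kernel description. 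Finally, canonicity of the filtration follows because $\W_k$ was characterized intrinsically — $\mu\in\W_k$ iff $T^j\mu=0$ for all $j<k$ in the sense just described, a condition not referring to any chart — and one checks the construction is functorial under diffeomorphisms, completing the identification $gr_W\V^\infty(M)\cong C^\infty(M,\Val^\infty(TM))$.
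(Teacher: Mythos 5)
First, a point of reference: the paper does not prove this statement at all — it is quoted from Alesker \cite{alesker_val_man2, alesker_barcelona} (with the explicit description of the graded isomorphism recalled in \eqref{eq_isom_expl} and the related surjectivity statement in Theorem \ref{th:intcc}, cited from \cite{alesker_val_man1}). So there is no in-paper argument to compare with; your proposal has to be measured against Alesker's actual construction, and at that level it follows the standard route (filtration on $\Omega^n(TM)$ by the vertical-degree condition, which is exactly the filtration $W_k(\Omega^n)$ recalled in Section \ref{sec_valman}, push it forward under $\Xi$, define a symbol map, glue with a partition of unity for surjectivity), but the two steps that carry the real content are asserted rather than proved.

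The first is well-definedness/injectivity: you define $T^k\mu$ by choosing a representing form of vertical degree $\geq k$ and taking its leading vertical part, and you claim that independence of the representative and the identity $\ker(T^k|_{\W_k})=\W_{k+1}$ follow by "using the Bernig--Br\"ocker description of the kernel." That is precisely the hard part of the theorem: the kernel of $(\varphi,\gamma)\mapsto\mu$ in \cite{bernig_broecker07} is described by Rumin-type conditions, and its compatibility with the vertical-degree filtration (i.e.\ that a kernel element lying in $W_k(\Omega^n)$ has vanishing order-$k$ symbol, and conversely that a valuation in $\W_k$ with vanishing symbol admits a representative in $W_{k+1}(\Omega^n)$) is a substantial argument in \cite{alesker_val_man1, alesker_val_man2}, not a routine check. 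The clean way around the circularity is to define $T_p^k\mu$ intrinsically by the rescaling formula \eqref{eq_isom_expl}, which makes well-definedness automatic, and then to verify that on a form of vertical degree $\geq k$ this derivative exists and computes your leading vertical part; your sketch goes in the opposite direction and leaves that equivalence (and hence the canonicity you invoke at the end) unaddressed. The second gap is that the theorem asserts the $\W_k$ are closed subspaces of the Fr\'echet space $\V^\infty(M)$; with your definition as the image $\Xi(W_k(\Omega^n(TM)))$ closedness is not at all obvious (images of continuous linear maps need not be closed), and it is exactly the intrinsic characterization via vanishing of the symbols — the part you postponed — that yields it. The surjectivity/partition-of-unity step, by contrast, is fine, since the filtration on forms is stable under multiplication by functions pulled back from $M$ and the symbol map is $C^\infty(M)$-linear.
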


Let us remark that there exist product structures on $\V^\infty(M)$ and on $\Val^\infty(T_pM)$ and the isomorphism of the theorem is an isomorphism of graded algebras. However, we will not need the product structure in this paper. 

Let us describe the isomorphism more explicitly. Let $\mu \in \W_k$ and $p \in M$. Let $\tau:U \to V \subset \R^n$ be a coordinate chart around $p$. The differential 
$d\tau_p:T_pM \to T_{\tau(p)} V \cong \R^n$ is a linear isomorphism. The valuation $T_p^k \mu \in \Val_k^\infty(T_pM)$ is defined by 
\begin{equation} \label{eq_isom_expl}
 T_p^k\mu(P):=\left.\frac{1}{k!}\frac{d^k}{dt^k}\right|_{t=0}(\tau^{-1})^*\mu\left(\tau(p)+t(d\tau_p(P)-\tau(p))\right), \quad P \in \mathcal P(T_pM).
\end{equation}
It is independent of the choice of $\tau$. Strictly speaking, by this definition we obtain an element of $\mathcal{V}^\infty(T_pM)^{tr}$ (translation invariant smooth valuations on $T_pM$), but this latter space can be canonically identified with $\Val^\infty(T_pM)$ \cite{alesker_val_man2}.

Under the action of the Euler-Verdier involution, each $\W_k$ splits as $\W_k^+ \oplus \W_k^-$, where $\W_k^\pm$ denotes the $(\pm 1)$-eigenspace of $\sigma$ acting on $\W_k$. If $\mu \in \W_k^\epsilon, \epsilon \in \{\pm 1\}$, then $T_p^k \mu \in \Val_k(T_pM)$ has parity $(-1)^k \epsilon$. 

Let us introduce a filtration on the space of $n$-forms on $T^*M$, 
following \cite{alesker_val_man1}.

For every $(p,\xi)  \in T^*M$ we define
\begin{multline*}
W_k(\Omega^n(T^*M))|_{(p,\xi)} := \left\{\omega \in \Lambda^n T_{(p,\xi)}^*(T^*M)\ |\ \omega|_F \equiv 0\ \text{ for all }\ F\subset T_{(p,\xi)}(T^*M)\right.\\
 \left.\text{with}\ \dim(F\cap T_{(p,\xi)} (\pi^{-1}(p)))>n-k\right\},
\end{multline*}
where $\pi:T^*M\to M$ is the projection. Then we have the filtration
\begin{multline*}
\Omega^n(T^*M) = W_0(\Omega^n(T^*M)) \supset W_1(\Omega^n(T^*M)) \supset \ldots \\
\ldots \supset W_n(\Omega^n(T^*M)) \supset W_{n+1}(\Omega^n(T^*M))=0. 
\end{multline*}

For $\epsilon \in \{\pm 1\}$ set 
\begin{displaymath}
W_k^\epsilon(\Omega^n(T^*M)):=\{\omega \in W_k^\epsilon(\Omega^n(T^*M)): \tilde s^* \omega=(-1)^{n+\epsilon} \omega\}, 
\end{displaymath}
where $\tilde s:T^*M \to T^*M, (x,\xi) \mapsto (x,-\xi)$. 

\begin{Theorem}[\cite{alesker_val_man1}]\label{th:intcc}
 The map $\Xi:\Omega^n(TM) \to \V^\infty(M)$ given by
\begin{displaymath}
 (\Xi(\omega))(P):=\int_{N_1(P)}\omega
\end{displaymath}
is surjective. More precisely, for $k \in \{0,1,\ldots,n\}$ and $\epsilon \in \{\pm 1\}$, the map $\Xi$ maps $W_k^\epsilon(\Omega^n(TM))$ onto $\W_k^\epsilon$ surjectively.
\end{Theorem}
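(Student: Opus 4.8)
\textbf{Plan for the proof of Theorem \ref{th:intcc}.}
The statement has two parts: first that $\Xi$ is well-defined and surjective onto $\V^\infty(M)$, and second the refined claim that $\Xi$ carries $W_k^\epsilon(\Omega^n(TM))$ onto $\W_k^\epsilon$. The plan is to reduce everything to the case $M=V$ a vector space by working in charts, and then to exploit the explicit description of the filtration on $\V^\infty(M)$ in terms of the symbol map $T_p^k$ together with the known structure theory of translation-invariant smooth valuations.

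For well-definedness and surjectivity onto $\V^\infty(M)$: by Lemma \ref{lemme6.1.5}, every $\omega\in\Omega^n(TM)$ defines a smooth valuation, decomposing $\Xi(\omega)(P)=\int_P\varphi+\int_{N(P)}\gamma$ with $\varphi=\iota^*\omega$ and $\gamma$ obtained by fibre-integration of $F^*\omega$ over $[0,1]$. Conversely, given a smooth valuation represented by a pair $(\varphi,\gamma)\in\Omega^n(M)\times\Omega^{n-1}(SM)$, one first multiplies $\gamma$ by a suitable cutoff/extension to obtain $\tilde\gamma\in\Omega^{n-1}(TM)$ with $\int_{N(P)}\gamma=\int_{N(P)}\tilde\gamma$, and then sets $\omega:=\pi_M^*\varphi + d\tilde\gamma$ so that $\int_{N_1(P)}\omega = \int_P\iota^*\pi_M^*\varphi + \int_{N_1(P)}d\tilde\gamma = \int_P\varphi+\int_{\partial N_1(P)}\tilde\gamma = \int_P\varphi+\int_{N(P)}\gamma$, where we used $\partial N_1(P)=N(P)$ and Stokes. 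This shows $\Xi$ is surjective.

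For the refined filtered statement: I would first check that $\Xi$ respects the filtrations, i.e. $\Xi(W_k(\Omega^n(TM)))\subseteq \W_k$. This follows from the explicit formula \eqref{eq_isom_expl} for $T_p^j$: differentiating $\Xi(\omega)$ applied to a rescaled body $j$ times, the $j$-th order term is governed by the restriction of $\omega$ at $(p,0)$ to $n$-dimensional subspaces meeting the fibre direction $T_p(\pi^{-1}(p))$ in a subspace of dimension $n-j$; the condition defining $W_k(\Omega^n(TM))$ forces these terms to vanish for $j<k$, so $T_p^j\Xi(\omega)=0$ for $j<k$, i.e. $\Xi(\omega)\in\W_k$. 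Likewise, compatibility with $\tilde s^*$ and the sign $(-1)^{n+\epsilon}$ translates, via the relation between $\tilde s$ on $T^*M$ and the involution $s$ used in the Euler-Verdier involution $\sigma$, into $\Xi(W_k^\epsilon)\subseteq\W_k^\epsilon$; here one must carefully track the factor $(-1)^n$ appearing in the definition of $\sigma$ against the normal-cycle versus disc-bundle reformulation, which is the most error-prone bookkeeping in the argument.

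The heart of the proof, and the main obstacle, is surjectivity of $W_k^\epsilon(\Omega^n(TM))\to\W_k^\epsilon$. Here I would argue by descending induction on $k$. Given $\mu\in\W_k^\epsilon$, its symbol $T_p^k\mu$ is a translation-invariant smooth valuation on $T_pM$ of parity $(-1)^k\epsilon$; by the structure theory of $\Val^\infty$ recalled in Section 2 (representing translation-invariant smooth valuations by pairs of translation-invariant forms on the sphere bundle of $T_pM$, applied fibrewise and glued using a partition of unity subordinate to a cover by charts), one produces a form $\omega_0\in W_k^\epsilon(\Omega^n(TM))$ whose image has the same symbol, i.e. $T_p^k(\mu-\Xi(\omega_0))=0$ for all $p$. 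Then $\mu-\Xi(\omega_0)\in\W_{k+1}^\epsilon$ and the inductive hypothesis finishes the job. The delicate points are: making the fibrewise choice of representing forms smooth in $p$ (a partition-of-unity argument, but one must ensure the chosen forms lie in $W_k^\epsilon$, not merely $W_k$, which is where the parity constraint on the fibrewise valuation is used); and the base case $k=n$, where $\W_n$ consists of valuations whose symbol is a multiple of the volume, handled directly by taking $\omega=\pi_M^*\varphi$ for an appropriate $n$-form $\varphi$ on $M$. These are essentially the arguments of \cite{alesker_val_man1}, and I would cite that paper for the technical verifications while giving the conceptual outline above.
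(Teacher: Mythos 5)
The paper does not actually prove Theorem \ref{th:intcc}: it is imported from Alesker \cite{alesker_val_man1} (restated here on $TM\cong T^*M$ via the metric), so there is no in-paper argument to measure your proposal against; what you have written is a reconstruction of the strategy of the cited source rather than a different route. The elementary parts of your outline are correct: well-definedness is Lemma \ref{lemme6.1.5}; plain surjectivity follows as you say by extending $\gamma$ to $\tilde\gamma\in\Omega^{n-1}(TM)$ and taking $\omega=\pi^*\varphi+d\tilde\gamma$ (with $\pi\colon TM\to M$), using $\partial N_1(P)=N(P)$, Stokes, and the fact that the pullback of $\varphi$ to the conical part of $N_1(P)$ factors through the $(n-1)$-dimensional $N(P)$ and so vanishes; and the inclusion $\Xi(W_k^\epsilon(\Omega^n(TM)))\subset\W_k^\epsilon$ is correctly reduced to the scaling behaviour visible in \eqref{eq_isom_expl} together with the $\tilde s$ versus Euler--Verdier sign count.

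The caveat is that the entire substance of the theorem sits in your inductive step: that every admissible symbol, i.e.\ every smooth section of the parity-$(-1)^k\epsilon$ part of $\Val_k^\infty(TM)$, is the symbol of $\Xi(\omega_0)$ for some $\omega_0\in W_k^\epsilon(\Omega^n(TM))$. This requires choosing the (highly non-unique) fibrewise representing forms on $S(T_pM)$ smoothly in $p$, converting them to $n$-forms that genuinely lie in $W_k^\epsilon$, and tracking the parity; you name these difficulties but delegate them to \cite{alesker_val_man1}. Since the paper itself only cites the result, that is a defensible division of labour, but be aware that as a standalone proof your text is an outline: the smooth-in-$p$ selection and the $\epsilon$-bookkeeping are exactly the steps that would have to be carried out in full.
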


Let now $(M,g)$ be a Riemannian manifold. Let $\Gamma(M)$ denote the space of vector fields on $M$, let $R$ be the Riemann curvature tensor of $M$ and $K$ its sectional curvature.

For $p\in M$, the tensor $R_p$ is an element of $\mathrm{Sym}^2\Lambda^2 T_pM \subset \Lambda^2 T_pM \otimes \Lambda^2 T_p M$. Let $R^*_p$ be the image of $R_p$ under the maps
\begin{displaymath}
 \Lambda^2 T_pM \otimes \Lambda^2 T_p M \stackrel{\cong} {\longrightarrow} \Lambda^2 T_pM \otimes \Lambda^{n-2} T_p M \subset \Lambda^n(T_pM \oplus T_pM) \cong \Lambda^n(T_{(p,v)}TM),
\end{displaymath}
where the first isomorphism is induced by the Hodge-$*$ operator.

We define an $n$-form $\omega\in\Omega^n(TM)$ by
\begin{displaymath}
 \omega_{(p,v)}:=\frac{1}{\kappa_{n-2}} R_p^*\in\Lambda^n(T_{(p,v)}TM),
\end{displaymath}
where the normalization constant $\kappa_{n-2}$ is the volume of the $(n-2)$-dimensional unit ball, that is
\begin{displaymath}
 \kappa_{n-2}=\frac{\pi^{(n-2)/2}}{\Gamma\left(\frac{n-2}{2}+1\right)}.
\end{displaymath}

By Lemma \ref{lemme6.1.5}, we may associate a smooth valuation to $\omega$. 
\begin{Definition}
The sectional curvature valuation $\riem \in\mathcal V^\infty (M)$ is defined by 
\begin{displaymath}
\riem(P):=\int_{N_1(P)}\omega, \quad P \in \mathcal{P}(V).
\end{displaymath} 
\end{Definition}

\begin{Theorem}
The valuation $\riem$ defined above has filtration index $2$ and belongs to the $(+1)$-eigenspace of the Euler-Verdier involution, i.e.\,$\riem \in \W_2^+$. For each $p \in M$, the Klain function of the induced valuation $T_p^2 \riem \in \Val_2^+(T_pM)$ equals the sectional curvature of $M$ at $p$.  
\end{Theorem}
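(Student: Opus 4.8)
The plan is to verify the three assertions --- filtration index, Euler--Verdier parity, and identification of the Klain function --- by reducing everything to a pointwise computation in a single tangent space, using the explicit formula \eqref{eq_isom_expl} and Theorem \ref{th:intcc}. First I would check that $\omega$ lies in $W_2(\Omega^n(TM))$: by construction $\omega_{(p,v)}$ is obtained from $R_p^*\in \Lambda^2 T_pM \otimes \Lambda^{n-2}T_pM$, so every monomial of $\omega_{(p,v)}$ contains exactly two ``horizontal'' factors (from the first $\Lambda^2 T_pM$) and $n-2$ ``fiber'' factors; hence $\omega|_F\equiv 0$ as soon as $\dim(F\cap T_{(p,v)}(\pi^{-1}(p)))>n-2$, which is precisely the defining condition for $W_2$. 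By Theorem \ref{th:intcc} this gives $\riem=\Xi(\omega)\in\W_2$. To see that the filtration index is exactly $2$ (and not larger), it suffices to exhibit one point where $T_p^2\riem\neq 0$, which will follow from the Klain function computation below whenever $M$ has nonzero sectional curvature somewhere; for flat $M$ the statement ``filtration index $2$'' should be read as $\riem\in\W_2$, which we already have.

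Next I would treat the Euler--Verdier parity. The involution $\sigma$ acts on a valuation represented by $(\varphi,\gamma)$ via $((-1)^n\varphi,(-1)^n s^*\gamma)$; lifted to $n$-forms on $TM$ it corresponds, up to sign, to the map $\tilde s:(p,v)\mapsto(p,-v)$ together with the factor $(-1)^n$. Since $\omega_{(p,v)}=\frac{1}{\kappa_{n-2}}R_p^*$ does \emph{not} depend on $v$, we have $\tilde s^*\omega=\omega$ on the horizontal-and-fiber splitting except for the orientation behaviour of the fiber factors under $v\mapsto -v$; the $n-2$ fiber covectors each contribute a sign, giving $\tilde s^*\omega=(-1)^{n-2}\omega=(-1)^n\omega$. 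Comparing with the definition of $W_k^\epsilon(\Omega^n(TM))$, this says $\omega\in W_2^{+1}(\Omega^n(TM))$, so by the refined surjectivity in Theorem \ref{th:intcc}, $\riem\in\W_2^+$. (I expect the only subtlety here is keeping track of whether the relevant sign is $(-1)^{n+\epsilon}$ or $(-1)^{n-\epsilon}$ in the paper's conventions; this is bookkeeping, not a real obstacle.)

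The heart of the proof is the Klain function computation. Fix $p\in M$ and a $2$-plane $E\subset T_pM$. Using normal coordinates $\tau$ centered at $p$ and formula \eqref{eq_isom_expl}, $T_p^2\riem$ is computed by expanding $\riem(\tau(p)+t(d\tau_p(P)-\tau(p)))$ to second order in $t$; because $\omega_{(p,v)}$ is constant in the fiber direction and the Riemann tensor in normal coordinates agrees with $R_p$ up to higher order, the disc-bundle integral $\int_{N_1(tP)}\omega$ reduces, after the rescaling, to a translation-invariant $2$-homogeneous valuation on $T_pM$ whose defining $n$-form is the constant form $\frac{1}{\kappa_{n-2}}R_p^*$. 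So it remains to compute the Klain function of the translation-invariant valuation on Euclidean $\R^n$ given by $\mu_R(P)=\int_{N_1(P)}\frac{1}{\kappa_{n-2}}R^*$ for a fixed algebraic curvature tensor $R$. Restricting to $P$ a convex body contained in a $2$-plane $E$, its disc bundle $N_1(P)$ has, over the relative interior of $P$, fiber the full $(n-1)$-ball in $E^\perp\oplus(\text{normal-in-}E)$, and the relevant contraction of $R^*$ with this fiber picks out exactly the sectional curvature $K(E)$ times the normalization $\kappa_{n-2}^{-1}$ times $\vol_{n-2}$ of the unit $(n-2)$-ball in the directions complementary to $E$; integrating over $P$ yields $K(E)\cdot\vol_E(P)$. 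This identifies $\Kl_{T_p^2\riem}(E)=K_p(E)$, as claimed.

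The main obstacle is the last step: carrying out honestly the contraction of $R_p^*$ against the tangent planes of the disc bundle $N_1(P)$ and checking that the normalization by $\kappa_{n-2}$ is the correct one to produce $\vol_E$ with coefficient exactly $K_p(E)$. Concretely, one must choose an orthonormal frame $e_1,e_2$ of $E$ and $e_3,\dots,e_n$ of $E^\perp$, write $R_p^*$ explicitly via the Hodge-$*$ on the second factor, identify which $n$-dimensional tangent planes of $N_1(P)\subset TM$ occur along $P\subset E$ (these are spanned by $e_1,e_2$ horizontally and by $n-2$ of the $e_j$ vertically, parametrizing the $(n-2)$-dimensional fiber), and integrate the resulting constant over the $(n-2)$-ball of directions --- this is where $\kappa_{n-2}$ enters. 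I expect the diagonal (sectional-curvature) components of $R$ to survive and the off-diagonal components to integrate to zero by symmetry, but verifying this cleanly, with all signs and the Hodge-$*$ normalization, is the real content; everything else is formal consequence of Theorems \ref{theo:filtr}, \ref{th:intcc} and Lemma \ref{lemme6.1.5}.
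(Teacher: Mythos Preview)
Your proposal is correct and follows essentially the same route as the paper: the filtration index is read off from the bidegree $R_p^*\in\Lambda^2 T_pM\otimes\Lambda^{n-2}T_pM$ via Theorem~\ref{th:intcc}, the Euler--Verdier parity from $\tilde s^*\omega=(-1)^{n-2}\omega=(-1)^n\omega$, and the Klain function from evaluating the disc-bundle integral over a small $2$-disc in $E$ using the coordinate formula~\eqref{eq_isom_expl} with $\tau=\exp^{-1}$. The only place where you anticipate slightly more work than is actually needed is your last paragraph: the ``off-diagonal'' components of $R$ do not merely integrate to zero by symmetry over the $(n-2)$-ball, they vanish \emph{pointwise}, because for an orthonormal frame $e_1,e_2\in E$, $e_3,\dots,e_n\in E^\perp$ the Hodge dual gives $*(e^k\wedge e^l)(e_3,\dots,e_n)\neq 0$ only when $\{k,l\}=\{1,2\}$, so $R_p^*(e_1,e_2;e_3,\dots,e_n)=R_{1212}=K_p(E)$ directly; the $\kappa_{n-2}$ then cancels against $\vol_{n-2}(D^{n-2})$ exactly as you predict.
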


\proof
In order to show that $\riem \in \W_2$, we use Theorem \ref{th:intcc}. It thus suffices to show that, for $(p,v) \in TM$
\begin{displaymath}
 R_p^*\big|_F \equiv 0
\end{displaymath}
for all $F\subset T_{(p,v)} (TM)$ with $\dim(F \cap T_{(p,v)} (\pi^{-1} p))= \dim(F\cap V_{(p,v)})>n-2$, where $V_{(p,v)} \cong T_pM$ is the vertical 
subspace of $T_{(p,v)}(TM)$. However, this is immediate from the fact that $R_p^* \in \Lambda^2(T_pM) \otimes \Lambda^{n-2}(T_pM)$.

Since $\tilde s^*\omega=(-1)^n \omega$, $\riem \in \W_2^+$ and hence $T_p^2 \riem \in \Val_2^{\infty,+}(T_pM)$ for all $p \in M$.

Finally, fix $p \in M$ and let us compute the Klain function of $T_p^2 \riem \in \Val_2^+(T_pM)$.

Let $E \in \Gr_2(T_pM)$, and let $D^2$ be the $2$-dimensional unit ball in $E$. Consider the exponential 
map $\exp:T_pM \to M$, and set $\tau:=\exp^{-1}$. Then we have $\tau(p)=0$ and $d\tau_p=\mathrm{id}|_{T_pM}$. Using \eqref{eq_isom_expl} we obtain 

\begin{align*}
 T_p^2\riem(D^2) & = \left.\frac{1}{2}\frac{d^2}{dt^2}\right|_{t=0}(\tau^{-1})^*\riem \left(\tau(p)+t(d\tau_p(D^2)-\tau(p))\right)\\
 & = \left.\frac{1}{2}\frac{d^2}{dt^2}\right|_{t=0} \riem \left(\tau^{-1}(tD^2)\right)\\
 & = \left.\frac{1}{2} \frac{d^2}{dt^2}\right|_{t=0} \int_{N_1(\tau^{-1}(tD^2))}\omega.
 \end{align*}

Define $S_t:=\tau^{-1}(tD^2)\in \mathcal P(M)$. Then
\begin{align*}
\riem(S_t) & = \int_{N_1(S_t)}\omega\\
& = \int_{N_1(S_t)} \omega_{(q,v)}(v_1^h,v_2^h,v_3^v,\ldots,v_n^v) dqdv,
\end{align*}
where $v_1^h,v_2^h$ are horizontal lifts (i.e.\,lifts in the horizontal subspace $H_{(q,v)}$) of an orthonormal basis $\{v_1,v_2\}$ of $T_qS_t$ and $v_3^v,\ldots,v_n^v$ are vertical lifts (i.e.\,lifts in the vertical subspace $V_{(q,v)}$) of an orthonormal basis of the orthogonal complement of $T_qS_t$ in $T_qM$. 

The definition of $\omega$ implies that 
\begin{displaymath}
\riem(S_t) = \frac{1}{\kappa_{n-2}}\vol_{n-2}(D^{n-2})\int_{S_t} R_q^*(v_1,v_2,v_3,\ldots,v_n) dq,
\end{displaymath} 
where $D^{n-2}$ denotes the $(n-2)$-dimensional unit ball, since, by definition, $R_q^*$ is constant on each fiber. 

Using the isomorphism given by the Hodge-$*$ operator, we obtain
\begin{align*}
\frac{1}{\kappa_2 t^2}\riem(S_t) & = \frac{1}{\kappa_2 t^2}\int_{S_t} R_q(v_1,v_2,v_1,v_2) dq\\
& = \frac{1}{\kappa_2 t^2} \int_{S_t} K(T_qS_t) dq\\
& \longrightarrow  K(E),\quad \text{as}\ t\to 0.
\end{align*}

By definition of $T_p^2 \riem$, we obtain 
\begin{align*}
T_p^2\riem(D^2) & = \left.\frac{1}{2}\frac{d^2}{dt^2}\right|_{t=0}\riem\left(S_t\right)\\
& =  \kappa_2 K(E)\\
& = \vol_2(D^2) K(E),
\end{align*}
and therefore 
\begin{displaymath}
 \Kl_{T_p^2 \riem}(E)=K(E),
\end{displaymath}
i.e. the Klain function of $T_p^2\riem$ coincides with the sectional curvature $K$ of $M$ at $p$.
\endproof

\subsection*{Examples} 

Let $G$ be a Lie group acting isotropically on $M$, i.e. $G$ acts transitively on the unit sphere bundle $SM$. Let us fix a point $p \in M$ and let $H \subset G$ be the stabilizer of $p$. Then the isomorphism from Theorem \ref{theo:filtr} restricts to an isomorphism 
\begin{displaymath}
 gr_W\V^\infty(M)^G\cong C^\infty(M, \Val^\infty (TM))^G\cong \Val^H(T_pM).
\end{displaymath}
In particular, we have the following isomorphism
\begin{equation}\label{eq:filtration}
(\W_k/\W_{k+1})^G \cong \Val^H_k(T_pM), \rho \mapsto T_p^k \rho.
\end{equation}

Since the form $\omega$ is $G$-invariant, the same holds true for the valuation $\riem$. Hence $T_p^2\riem \in \Val^H_2(T_pM)$.

\begin{enumerate}
 \item \textbf{The complex projective space.}  

Let $M:=\C P^n$ with its Fubini-Study metric, $T_pM=\C^n$, $G:=\mathrm U(n+1)$, $H:=\mathrm{Stab}_p=\mathrm{U}(1) \times \mathrm U(n)$. The action of $H$ on $\mathbb{C}^n$ is not faithful, factoring out the kernel leaves us with the canonical action of $\mathrm{U}(n)$ on $\mathbb{C}^n$.

 It is well-known  that the $\mathrm{U}(n)$-orbits of $\Gr_2(\C^n)$ are characterized by their K\"ahler angles (compare e.g. Tasaki \cite{tasaki00} for a more general statement). More precisely, let a plane $E$ be generated by two orthogonal vectors $v,w$. Then the K\"ahler angle $\varphi(E) \in \left[0,\frac{\pi}{2}\right]$ is defined by the equation 
\begin{displaymath}
\cos^2\varphi(E)=\langle v,iw \rangle^2. 
\end{displaymath}
Thus $\varphi$ measures the angle between the complex planes spanned by $\{v,iv\}$ and $\{w,iw\}$ (cf. \cite{klingenberg_book}).

The sectional curvature of a plane $E$ is given by $K(E)=1+3\cos^2 \varphi(E)$, hence the Klain function of $T_p^2 \riem$ is given by
\begin{displaymath}
\Kl_{T_p^2\riem} (E) = 1+3\cos^2\varphi(E), 
\end{displaymath}

Let us relate the sectional curvature valuation 
\begin{displaymath}
T_p^2\riem \in \Val_2(T_p \mathbb{CP}^n)^{\mathrm{U}(n)}=\Val_2(\mathbb{C}^n)^{\mathrm{U}(n)} 
\end{displaymath}
with known valuations from hermitian integral geometry. Several bases of $\Val^{\mathrm U(n)}$ have been constructed in \cite{bernig_fu_hig}, for instance {\bf hermitian intrinsic volumes} and {\bf Tasaki valuations}. In particular, a basis for $\Val_2^{\mathrm U(n)}$ is given by 
the Tasaki valuations $\tau_{2,i},\,i=0,1$. Their Klain functions are
\begin{displaymath}
\Kl_{\tau_{2,0}}=1,\qquad\qquad \Kl_{\tau_{2,1}}=\cos^2\varphi, 
\end{displaymath}
where $\varphi$ is the K\"ahler angle. Hence the sectional curvature valuation $T_p^2\riem$ can be written in terms of the Tasaki valuations as
\begin{displaymath}
 T_p^2\riem=\tau_{2,0}+3\tau_{2,1}.
\end{displaymath}
\item Let $M:=\mathbb{HP}^n$, the quaternionic projective space with its standard metric, $G:=\mathrm{Sp}(n+1)$, $T_pM \cong \mathbb{H}^n$, $H:=\mathrm{Stab}_p=\mathrm{Sp}(n) \times \mathrm{Sp}(1)$. Again the action of $H$ on $T_pM$ is not faithful. Factoring out the kernel we obtain an action of $\mathrm{Sp}(n) \cdot \mathrm{Sp}(1)$ on $\mathbb{H}^n$.

The $\mathrm{Sp}(n) \cdot \mathrm{Sp}(1)$-orbits on $\Gr_2(\mathbb{H}^n)$ were described in \cite{bernig_solanes}. Given a $2$-plane $E \subset \mathbb{H}^n$, choose an orthonormal basis $u_1,u_2$ of $E$. Then the standard quaternionic hermitian scalar product of $u_1,u_2$ is a pure quaternion. The orbit of $E$ is uniquely characterized by the norm $\lambda$ of this quaternion. Moreover, there exist an $\mathrm{Sp}(n) \cdot \mathrm{Sp}(1)$ and translation invariant continuous valuation $\tau$ whose Klain function equals $\lambda^2$. These statements are shown in the case $n=2$ in \cite[Theorems 1 and 2]{bernig_solanes}, but the proofs work for higher $n$ as well. 

On the other hand, the sectional curvature of $\mathbb{HP}^n$ at the $2$-plane $E$ equals $1+3\lambda^2$, compare e.g. \cite{kraines66}. Since $\dim \Val_2(\mathbb{H}^n)^{\mathrm{Sp}(n) \cdot \mathrm{Sp}(1)}=2$ \cite{bernig_quat09}, we obtain by comparing the Klain functions 
\begin{displaymath}
 T_p^2\riem=\mu_2+3\tau,
\end{displaymath}
where $\mu_2$ is the second intrinsic volume (whose Klain function is $1$).

\item \textbf{The octonionic projective plane.}
Due to the non-associativity of $\O$, the concept of octonionic projective 
space only makes sense in dimension $2$ (\cite{baez}). For $M=\O P^2$, we have $T_pM=\O^2$, $G:=\mathrm{F}_4$ and $H:=\mathrm{Stab}_p=\spin(9)$. 

For $(a,b),(c,d)\in\O^2$ with $\|(a,b)\|=\|(c,d)\|=1$ and $\langle (a,b),(c,d)\rangle=0$, the sectional curvature of the plane generated by 
$(a,b), (c,d)$ is given by (cf.\,\cite{brown_gray})
\begin{align*}
K(E_{(a,b),(c,d)}) & = 4\left[\| a\wedge c\|^2+\| b\wedge d\|^2+\frac{1}{4}\|a\|^2\|d\|^2+\frac{1}{4}\|b\|^2\|c\|^2\right.\\
 & \left.\ \ \ +\frac{1}{2}\langle ab,cd\rangle-\langle ad,bc\rangle\right],
\end{align*}
where $\| a\wedge b\|^2$ is
\begin{displaymath}
 \| a\wedge b\|^2=\det\left(
\begin{array}{cc}
\langle a,a\rangle & \langle a,b \rangle\\
\langle b,a\rangle &  \langle b,b \rangle
\end{array}\right)=\|a\|^2\|b\|^2-\langle a,b\rangle^2.
\end{displaymath}

In particular,
\begin{displaymath}
K(E_{(1,0),(\mathbf{i},0)})=4, \quad K(E_{(1,0),(0,1)})=1.
\end{displaymath}

Alesker \cite{alesker08} constructed a valuation $\tau_{\text{oct}}$ on $\O^2$ which is $\spin(9)$-invariant and of degree of homogeneity $2$, called the {\bf octonionic pseudo-volume}. Its Klain function satisfies 
\begin{displaymath} 
\Kl_{\tau_{\text{oct}}}(E_{(1,0),(\mathbf{i},0)})=0, \quad \Kl_{\tau_{\text{oct}}}(E_{(1,0),(0,1)})=1. 
\end{displaymath}

Since we have shown that $\Val_2^{\spin(9)}$ is of dimension $2$, the valuation $T_p^2\riem$ can be expressed as a linear combination of the second intrinsic volume and the octonionic pseudo-volume. Comparing the values of the Klain functions, we obtain 
\begin{displaymath}
T_p^2 \riem=4\mu_2-3\tau_{\text{oct}}. 
\end{displaymath}

\end{enumerate}

\def\cprime{$'$}


\begin{thebibliography}{10}

\bibitem{alesker00}
Semyon Alesker.
\newblock On {P}. {M}c{M}ullen's conjecture on translation invariant
  valuations.
\newblock {\em Adv. Math.}, 155(2):239--263, 2000.

\bibitem{alesker_mcullenconj01}
Semyon Alesker.
\newblock {Description of translation invariant valuations on convex sets with
  solution of P. McMullen's conjecture.}
\newblock {\em Geom. Funct. Anal.}, 11(2):244--272, 2001.

\bibitem{alesker04_product}
Semyon Alesker.
\newblock The multiplicative structure on continuous polynomial valuations.
\newblock {\em Geom. Funct. Anal.}, 14(1):1--26, 2004.

\bibitem{alesker_su2_04}
Semyon Alesker.
\newblock {SU(2)-invariant valuations.}
\newblock {Milman, V. D. (ed.) et al., Geometric aspects of functional
  analysis. Papers from the Israel seminar (GAFA) 2002--2003. Berlin: Springer.
  Lecture Notes in Mathematics 1850, 21-29 (2004)}, 2004.

\bibitem{alesker_val_man1}
Semyon Alesker.
\newblock {Theory of valuations on manifolds. I: Linear spaces.}
\newblock {\em Isr. J. Math.}, 156:311--339, 2006.

\bibitem{alesker_val_man2}
Semyon Alesker.
\newblock Theory of valuations on manifolds. {II}.
\newblock {\em Adv. Math.}, 207(1):420--454, 2006.

\bibitem{alesker_survey07}
Semyon Alesker.
\newblock Theory of valuations on manifolds: a survey.
\newblock {\em Geom. Funct. Anal.}, 17(4):1321--1341, 2007.

\bibitem{alesker_val_man4}
Semyon Alesker.
\newblock Theory of valuations on manifolds. {IV}. {N}ew properties of the
  multiplicative structure.
\newblock In {\em Geometric aspects of functional analysis}, volume 1910 of
  {\em Lecture Notes in Math.}, pages 1--44. Springer, Berlin, 2007.

\bibitem{alesker08}
Semyon Alesker.
\newblock Plurisubharmonic functions on the octonionic plane and
  {S}pin(9)-invariant valuations on convex sets.
\newblock {\em J. Geom. Anal.}, 18(3):651--686, 2008.

\bibitem{alesker_fourier}
Semyon Alesker.
\newblock A {F}ourier type transform on translation invariant valuations on
  convex sets.
\newblock {\em Israel J. Math.}, 181:189--294, 2011.

\bibitem{alesker_barcelona}
Semyon Alesker.
\newblock New structures on valuations and applications.
\newblock In Eduardo Gallego and Gil Solanes, editors, {\em Integral Geometry
  and Valuations}, Advanced Courses in Mathematics - CRM Barcelona, pages
  1--45. Springer Basel, 2014.

\bibitem{alesker_val_man3}
Semyon Alesker and Joseph H.~G. Fu.
\newblock Theory of valuations on manifolds. {III}. {M}ultiplicative structure
  in the general case.
\newblock {\em Trans. Amer. Math. Soc.}, 360(4):1951--1981, 2008.

\bibitem{baez}
John~C. Baez.
\newblock The octonions.
\newblock {\em Bull. Amer. Math. Soc. (N.S.)}, 39(2):145--205, 2002.

\bibitem{bernig_sun09}
Andreas Bernig.
\newblock {A Hadwiger type theorem for the special unitary group.}
\newblock {\em Geom. Funct. Anal.}, 19:356--372, 2009.

\bibitem{bernig_quat09}
Andreas Bernig.
\newblock {A product formula for valuations on manifolds with applications to
  the integral geometry of the quaternionic line.}
\newblock {\em Comment. Math. Helv.}, 84(1):1--19, 2009.

\bibitem{bernig_g2}
Andreas Bernig.
\newblock Integral geometry under {$G_2$} and {${\rm Spin}(7)$}.
\newblock {\em Israel J. Math.}, 184:301--316, 2011.

\bibitem{bernig_aig10}
Andreas Bernig.
\newblock Algebraic integral geometry.
\newblock In {\em Global Differential Geometry}, volume~17 of {\em Springer
  Proceedings in Mathematics}, pages 107--145. Springer, Berlin Heidelberg,
  2012.

\bibitem{bernig_qig}
Andreas Bernig.
\newblock {Invariant valuations on quaternionic vector spaces}.
\newblock {\em J. Inst. Math. Jussieu}, 11:467--499, 2012.

\bibitem{bernig_broecker07}
Andreas Bernig and Ludwig Br{\"o}cker.
\newblock {Valuations on manifolds and Rumin cohomology.}
\newblock {\em J. Differ. Geom.}, 75(3):433--457, 2007.

\bibitem{bernig_fu06}
Andreas Bernig and Joseph H.~G. Fu.
\newblock Convolution of convex valuations.
\newblock {\em Geom. Dedicata}, 123:153--169, 2006.

\bibitem{bernig_fu_hig}
Andreas Bernig and Joseph H.~G. Fu.
\newblock Hermitian integral geometry.
\newblock {\em Ann. of Math.}, 173:907--945, 2011.

\bibitem{bernig_fu_solanes}
Andreas Bernig, Joseph H.~G. Fu, and Gil Solanes.
\newblock Integral geometry of complex space forms.
\newblock {\em Geom. Funct. Anal.}, 24(2):403--492, 2014.

\bibitem{bernig_hug}
Andreas Bernig and Daniel Hug.
\newblock {Kinematic formulas for tensor valuations}.
\newblock Preprint arxiv:1402.2750.

\bibitem{bernig_solanes}
Andreas Bernig and Gil Solanes.
\newblock {Classification of invariant valuations on the quaternionic plane.}
\newblock {\em J. Funct. Anal.}, 267:2933--2961, 2014.

\bibitem{borel49}
Armand Borel.
\newblock Some remarks about {L}ie groups transitive on spheres and tori.
\newblock {\em Bull. Amer. Math. Soc.}, 55:580--587, 1949.

\bibitem{broecker_tomdieck}
Theodor Br{\"o}cker and Tammo tom Dieck.
\newblock {\em Representations of compact {L}ie groups}, volume~98 of {\em
  Graduate Texts in Mathematics}.
\newblock Springer-Verlag, New York, 1995.
\newblock Translated from the German manuscript, Corrected reprint of the 1985
  translation.

\bibitem{brown_gray}
Robert~B. Brown and Alfred Gray.
\newblock Riemannian manifolds with holonomy group {${\rm S}pin$} (9).
\newblock In {\em Differential geometry (in honor of {K}entaro {Y}ano)}, pages
  41--59. Kinokuniya, Tokyo, 1972.

\bibitem{fu94}
Joseph H.~G. Fu.
\newblock Curvature measures of subanalytic sets.
\newblock {\em Amer. J. Math.}, 116(4):819--880, 1994.

\bibitem{fu_barcelona}
Joseph~H.G. Fu.
\newblock Algebraic integral geometry.
\newblock In Eduardo Gallego and Gil Solanes, editors, {\em Integral Geometry
  and Valuations}, Advanced Courses in Mathematics - CRM Barcelona, pages
  47--112. Springer Basel, 2014.

\bibitem{fulton_harris91}
William Fulton and Joe Harris.
\newblock {\em {Representation theory. A first course.}}
\newblock {Graduate Texts in Mathematics. 129. New York etc.: Springer-Verlag,.
  xv, 551 p., 144 ill. }, 1991.

\bibitem{hadwiger_vorlesung}
Hugo Hadwiger.
\newblock {\em Vorlesungen \"uber {I}nhalt, {O}berfl\"ache und
  {I}soperimetrie}.
\newblock Springer-Verlag, Berlin-G\"ottingen-Heidelberg, 1957.

\bibitem{klain95}
Daniel~A. Klain.
\newblock A short proof of {H}adwiger's characterization theorem.
\newblock {\em Mathematika}, 42(2):329--339, 1995.

\bibitem{klain_rota}
Daniel~A. Klain and Gian-Carlo Rota.
\newblock {\em Introduction to geometric probability}.
\newblock Lezioni Lincee. [Lincei Lectures]. Cambridge University Press,
  Cambridge, 1997.

\bibitem{klingenberg_book}
Wilhelm Klingenberg.
\newblock {\em Riemannian geometry}, volume~1 of {\em de Gruyter Studies in
  Mathematics}.
\newblock Walter de Gruyter \& Co., Berlin-New York, 1982.

\bibitem{kraines66}
Vivian~Yoh Kraines.
\newblock Topology of quaternionic manifolds.
\newblock {\em Trans. Amer. Math. Soc.}, 122:357--367, 1966.

\bibitem{mcmullen77}
Peter McMullen.
\newblock Valuations and {E}uler-type relations on certain classes of convex
  polytopes.
\newblock {\em Proc. London Math. Soc. (3)}, 35(1):113--135, 1977.

\bibitem{montgomery_samelson43}
Deane Montgomery and Hans Samelson.
\newblock Transformation groups of spheres.
\newblock {\em Ann. of Math. (2)}, 44:454--470, 1943.

\bibitem{park02}
Heungii Park.
\newblock {Kinematic formulas for the real subspaces of complex space forms of
  dimension $2$ and $3$.}
\newblock PhD-thesis University of Georgia 2002.

\bibitem{rumin94}
Michel Rumin.
\newblock {Differential forms on contact manifolds. (Formes diff{\'e}rentielles
  sur les vari{\'e}t{\'e}s de contact.)}.
\newblock {\em J. Differ. Geom.}, 39(2):281--330, 1994.

\bibitem{sudbery83}
Anthony Sudbery.
\newblock Octonionic description of exceptional {L}ie superalgebras.
\newblock {\em J. Math. Phys.}, 24(8):1986--1988, 1983.

\bibitem{sudbery84}
Anthony Sudbery.
\newblock Division algebras, (pseudo)orthogonal groups and spinors.
\newblock {\em J. Phys. A}, 17(5):939--955, 1984.

\bibitem{tasaki00}
Hiroyuki Tasaki.
\newblock Generalization of {K}{\"a}hler angle and integral geometry in complex
  projective spaces.
\newblock In {\em Steps in differential geometry ({D}ebrecen, 2000)}, pages
  349--361. Inst. Math. Inform., Debrecen, 2001.

\bibitem{voide_thesis}
Floriane Voide.
\newblock {$\spin(9)$-invariant valuations.}
\newblock PhD-thesis Goethe University Frankfurt 2013.

\bibitem{wannerer_area_measures}
Thomas Wannerer.
\newblock Integral geometry of unitary area measures.
\newblock {\em Adv. Math.}, 263:1--44, 2014.

\bibitem{wannerer_unitary_module}
Thomas Wannerer.
\newblock The module of unitarily invariant area measures.
\newblock {\em J. Differential Geom.}, 96(1):141--182, 2014.

\end{thebibliography}

\end{document}